\newtheorem{theorem}{Theorem}[section]
\newtheorem{lemma}[theorem]{Lemma}
\theoremstyle{definition}
\numberwithin{equation}{section}
\def\bfR{\mathbb{R}}
\def\bfN{\mathbb{N}}
\def\mcS{\mathcal{S}}
\def\mcT{\mathcal{T}}
\def\supp{\mbox{supp}}
\title[1D PLASMA MODEL WITH TRANSPORT FIELD]{A One-dimensional kinetic model of plasma dynamics with a transport field}
\author[Charles Nguyen, Jennifer Anderson, and Stephen Pankavich]{}
\subjclass{Primary: 35L60, 35Q83; Secondary: 82C22, 82D10}
 \keywords{Kinetic Theory, Vlasov equation, local-in-time existence, regularity}
 \email{charles.nguyen@mavs.uta.edu} 
 \email{jra1116@math.tamu.edu}
 \email{pankavic@mines.edu}
 \thanks{This work was supported by the Center for Undergraduate Research in Mathematics under NSF Grant DMS-0636648 and independently by the National Science Foundation under NSF Grants DMS-0908413 and DMS-1211667.  Additionally, a portion of this work was submitted in partial fulfillment of the requirements of the second author's Master of Science degree at the University of Texas at Arlington.}
\begin{document}

\maketitle

\centerline{\scshape Charles Nguyen}
\medskip
{\footnotesize
 % please put the address of the second  and third author
 \centerline{Department of Mathematics}
 \centerline{University of Texas at Arlington}
 \centerline{Arlington, TX 76019 USA}

}

\medskip

\centerline{\scshape Jennifer Anderson}
\medskip
{\footnotesize
 % please put the address of the second  and third author
 \centerline{Department of Mathematics}
 \centerline{Texas A\&M University}
  \centerline{College Station, TX 77843 USA}

}

\medskip

\centerline{\scshape Stephen Pankavich}
\medskip
{\footnotesize
% please put the address of the first author
 \centerline{Department of Applied Mathematics and Statistics}
 \centerline{Colorado School of Mines}
 \centerline{Golden, CO 80002 USA}

} % Do not forget to end the {\footnotesize by the sign }

\medskip

\begin{abstract}
Motivated by the fundamental model of a collisionless plasma, the Vlasov-Maxwell (VM) system, we consider a related, nonlinear system of partial differential equations in one space and one momentum dimension.  As little is known regarding the regularity properties of solutions to the non-relativistic version of the (VM) equations, we study a simplified system which also lacks relativistic velocity corrections and prove local-in-time existence and uniqueness of classical solutions to the Cauchy problem.  For special choices of initial data, global-in-time existence of these solutions is also shown.  Finally, we provide an estimate which, independent of the initial data,  yields additional global-in-time regularity of the associated field.
\end{abstract}

\section{Introduction}
A plasma is a partially or completely ionized gas. Such a form of matter occurs if the velocity of individual particles in a material achieves an enormous magnitude, perhaps a sizable fraction of the speed of light.  Plasmas are widely used in solid state physics since they are great conductors of electricity due to their free-flowing abundance of ions and electrons. When a plasma is of low density or the time scales of interest are sufficiently small, it is deemed to be ``collisionless'', as collisions between particles become infrequent.  Many examples of collisionless plasmas occur in nature, including the solar wind, galactic nebulae, the Van Allen radiations belts, and comet tails.

The fundamental equations which describe the time evolution of a collisionless plasma are given by the Vlasov-Maxwell system:

\begin{equation} \label{VM} \tag{VM} \left \{ \begin{gathered} \partial_t f + v \cdot \nabla_x f + \left (E + v \times
B \right ) \cdot \nabla_v f = 0 \\  \rho(t,x) = \int f(t,x,v) \ dv, \quad j(t,x)= \int v  f(t,x,v) \ dv \\  \partial_t E = \nabla \times B - j, \qquad \nabla \cdot E = \rho \\  \partial_t B = - \nabla \times E,
\qquad \nabla \cdot B = 0. \\ \end{gathered} \right.
\end{equation}
Here, $f$ represents the density of (positively-charged) ions in the plasma, while $\rho$ and $j$ are the charge and current density, and $E$ and $B$ represent electric and magnetic fields generated by the charge and current.  The independent variables, $t > 0$ and $x,v \in \bfR^3$ represent time, position, and velocity, respectively, and physical constants, such as the charge and mass of particles, as well as, the speed of light, have been normalized to one.  In the presence of large velocities, relativistic corrections become important and the corresponding system to consider is the relativistic analogue of (\ref{VM}), denoted by (RVM) and constructed by replacing $v$ with $$\hat{v} = \frac{v}{\sqrt{1 + \vert v \vert^2}}$$ in the first equation of (\ref{VM}), called the Vlasov equation, and in the integrand of the current $j$.  General references on the kinetic equations of plasma dynamics, such as (VM) and (RVM), include \cite{Glassey} and \cite{VKF}.  

Over the past twenty-five years significant progress has been made in the analysis of (RVM), specifically, the global existence of weak solutions (which also holds for (VM); see \cite{DPL}) and the determination of sufficient conditions which ensure global existence of classical solutions (originally discovered in \cite{GlStr}, and later in \cite{KlStaf}, and \cite{BGP}) for the Cauchy problem.  Additionally, a wide array of information has been discovered regarding the electrostatic versions of both (\ref{VM}) and (RVM) - the Vlasov-Poisson and relativistic Vlasov-Poisson systems, respectively.  These models do not include magnetic effects within their formulation, and the electric field is given by an elliptic, rather than a hyperbolic equation. This simplification has led to a great deal of progress concerning the electrostatic systems, including theorems regarding global existence, stability, and long-time behavior of solutions; though a global existence theorem for classical solutions with arbitrary data in the relativistic case has remained elusive.  Independent of these advances, many of the most basic existence and regularity questions remain unsolved for (\ref{VM}).  The main difficulty which arises is the loss of strict hyperbolicity of the kinetic system due to the possibility that particle velocities $v$ may travel faster than the propagation of signals from the electric and magnetic fields, which do so at the speed of light $c = 1$. As one can see, this difficulty is remedied by the inclusion of relativistic velocity corrections which uniformly constrain velocities $\vert \hat{v} \vert < 1$.  In many physical systems one does not consider the effects of special (or general) relativity, but at the kinetic level such velocity corrections may play a fundamental role, even in the basic existence, uniqueness, and regularity properties of solutions.  Hence, one of the primary goals of the current work is to understand how this difference in formulation affects such properties, and yield a partial answer to the question, ``Are relativistic velocity corrections really necessary to ensure classical well-posedness?''.  It should be noted here that, whereas (RVM) is invariant under Lorentzian transformations, (\ref{VM}) lacks invariance properties as it combines a Galilean-invariant equation for the particle distribution with a Lorentz-invariant field equation.  To date, though, we are unaware of any argument which truly utilizes the invariance properties of (RVM) or (\ref{VM}) in order to arrive at an existence, uniqueness, or regularity theorem.

Often a remedy to the lack of progress on such a problem is to reduce the dimensionality of the system.  Unfortunately, posing the problem in one-dimension (i.e., $x,v \in \bfR$) eliminates the relevance of the magnetic field as the Maxwell system decouples, yielding the one-dimensional Vlasov-Poisson system:
\begin{equation} \tag{VP} \label{VP} \left \{ \begin{gathered}
 \partial_t f + v\partial_x f + E \partial_v f = 0\\
\partial_x E = \int f dv. \end{gathered} \right.
\end{equation}
The lowest-dimensional reduction which includes magnetic effects is the so-called ``one-and-one-half-dimensional'' system which is constructed by taking $x \in \bfR$ but $v \in \bfR^2$:
\begin{equation} \tag{1.5D VM} \label{1.5DVM} \left \{ \begin{gathered}
 \partial_t f + v\partial_x f + (E_1 + v_2 B) \partial_{v_1} f + (E_2 - v_1 B) \partial_{v_2} f = 0\\
 \partial_x E_1 = \int f dv, \qquad  \partial_t E_1 = - \int v_1 f dv \\
\partial_t (E_2 + B) + \partial_x(E_2 + B)  = - \int v_2 f dv \\
\partial_t (E_2 - B) - \partial_x(E_2 - B)  = - \int v_2 f dv. \end{gathered} \right.
\end{equation}
Surprisingly, the question of classical regularity remains open even in this simplified case.  The noticeable difference between (\ref{1.5DVM}) and (\ref{VP}) is the introduction of electric and magnetic fields $E_2$ and $B$ that are solutions to transport equations.  Thus, in order to study the existence and regularity questions, but keep the problem posed in a one-dimensional setting, we consider the following nonlinear system of PDE which couples the Vlasov equation to a transport field equation:
\begin{equation} \label{SVM} \left \{ \begin{gathered}
 \partial_t f + v\partial_x f + B \partial_v f = 0\\
\partial_t B + \partial_x B = \int f dv. \end{gathered} \right.
\end{equation}
The system (\ref{SVM}) is supplemented by given initial data
\begin{equation}
\label{IC}
f(0,x,v) = f_0(x,v),\quad  B(0,x) = B_0(x).
\end{equation}
Since the field equation in (\ref{SVM}) is hyperbolic, we denote it with a magnetic field variable $B$, as opposed to the electric field $E$ of (\ref{VP}) which satisfies an elliptic equation.  Notice that these equations retain the main difficulty of (\ref{VM}), namely the interaction between characteristic particle velocities $v$ and constant field velocities $c = 1$.  Hence, we hope to analyze (\ref{SVM}) and develop estimates or methods which can be generalized to deal with (\ref{1.5DVM}). 
To our knowledge, this is the first analytic study of these kinetic equations, formed from a system of conservation laws coupled by a non-local field dependence on the particle densities.  As such, the properties of solutions to (\ref{SVM}) may also be of interest to mathematicians studying scalar, hyperbolic conservation laws in a two-dimensional phase space.  
A related model, similar to (\ref{SVM}), was previously studied \cite{PG} in an attempt to understand the possible singularities generated by the intersection of Vlasov and transport characteristics.  However, only minor results were derived in this work, all of which concerned a reduced system of ODEs rather than PDEs.  Here, we present results for the full system and our results also generalize to their original system of PDEs.
We also mention the work \cite{BP} as it contains a discussion of computational methods for (\ref{SVM}).

This paper proceeds as follows.  In the next section, we will derive \emph{a priori} estimates in order to simplify the proof of the local-in-time existence and uniqueness of classical solutions to (\ref{SVM}) with given smooth initial data (\ref{IC}), which follows in Section $3$.  In Section $4$, we present two results concerning global existence, namely that certain particle distributions for which particle velocities travel at light speed do remain smooth globally in time.  The unfortunate detail of Theorems \ref{T1} and \ref{T2}, however, is that they do not necessarily extend to arbitrary initial data.  Hence, it still remains an open problem to show that any solution launched from smooth initial data remains smooth for all time.  As an intermediate step, we prove in Section $5$ an additional regularity result for the associated field $B$ using a decomposition of derivatives similar to that of \cite{GlStr}.  More specifically, we show \emph{a priori} that for any $T > 0$, $B \in C^{0,1/2}([0,T] \times \bfR)$. Throughout the paper the value $C > 0$ will denote a generic constant that may change from line to line.  When necessary, we will specifically identify the quantities upon which $C$ may depend (e.g., $C_T$).  Since we are interested in classical solutions, we will also assume the initial data are smooth and bounded, i.e. $f_0 \in C_c^1(\bfR^2)$ and $B_0 \in C^1(\bfR)$, for the entirety of the paper.

\section{A priori estimates}

To begin, we will first prove a lemma that will allow us to represent and bound the particle density, its derivatives, and the associated field.
\renewcommand{\labelenumi}{(\alph{enumi}).}

\begin{lemma}[Estimates on $f$, $B$, $\partial f$, and velocity support]
\label{L1}
\ \vspace{0.1in}
\begin{enumerate}
\item Let $T > 0$ be given and $f$ be the solution of the Vlasov equation with given initial data \begin{eqnarray*} \partial_t f + v \partial_x f + B \partial_v f = 0, &\qquad& t \in (0,T), \ x,v \in \bfR \\ f(0,x,v) = f_0(x,v), &\qquad& x,v \in \bfR \end{eqnarray*} for some given $B \in C^1([0,T] \times \bfR)$.  Then, $f  \in C^1([0,T]; C^1_c(\bfR^2))$ and for any $t \in [0,T]$ we have the estimates
$$\begin{gathered} \Vert f(t) \Vert_\infty \leq \Vert f_0\Vert _\infty, \\
\Vert \partial_x f(t) \Vert_\infty \leq \Vert \partial_x f_0 \Vert_\infty + C \int_0^t \Vert \partial_x B(s) \Vert_\infty ( 1 + s \sup_{\tau \in [0,s]} \Vert \partial_x f(\tau) \Vert_\infty ) \ ds, \\
\Vert \partial_v f(t) \Vert_\infty \leq \Vert \partial_v f_0 \Vert_\infty + \int_0^t \Vert \partial_x f(s) \Vert_\infty \ ds  \end{gathered} $$ where $C$ depends only upon the initial data.
\vspace{0.1in}
\item Let $T > 0$ be given and $B$ be the solution to the field equation with a given initial condition, namely \begin{eqnarray*} \partial_t B + \partial_x B = \int f \ dv,& \qquad & t \in (0,T), \ x \in \bfR\\ B(0,x) = B_0(x), & \qquad & x \in \bfR \end{eqnarray*} for some given $f \in C^1([0,T]; C^1_c(\bfR^2))$ and define $$P(t) = \sup \{ \vert v \vert : f(s, x, v) \neq 0, \ s \in [0,t], x \in \bfR \}.$$  Then, $B \in C^1([0,T] \times \bfR)$ and for any $t \in [0,T]$, we have the estimate
$$ \Vert B(t) \Vert_\infty \leq C \left (1 + \int_0^t P(s) \ ds \right )$$ where $C$ depends only upon the initial data.
\end{enumerate}
\end{lemma}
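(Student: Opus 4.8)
Both parts are preparatory \emph{a priori} estimates, following the same plan: solve the relevant linear equation along its characteristics and read off the bounds by integrating the source. For part (a), I would fix the given field $B \in C^1([0,T]\times\bfR)$ and introduce the characteristic flow $(X,V)(s) = (X,V)(s;t,x,v)$ solving $\dot X = V$, $\dot V = B(s,X)$ with data $(X,V)(t) = (x,v)$. Since $B$ is bounded and $C^1$, $V$ can grow at most linearly, so the flow exists on all of $[0,T]$, is unique, and depends in a $C^1$ manner on $(t,x,v)$; moreover it maps the compact set $\supp f_0$ to a compact set, which propagates the compact support of $f$ and gives $f \in C^1([0,T];C^1_c(\bfR^2))$. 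Because $f$ is constant along characteristics, $f(t,x,v) = f_0(X(0),V(0))$, which immediately yields $\Vert f(t)\Vert_\infty \le \Vert f_0\Vert_\infty$.

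Next I would differentiate the Vlasov equation to obtain transport equations for the derivatives. Applying $\partial_x$ and $\partial_v$ and using that $B$ is independent of $v$ gives, along characteristics, $\frac{d}{ds}\partial_x f = -\partial_x B(s,X)\,\partial_v f$ and $\frac{d}{ds}\partial_v f = -\partial_x f$. Integrating from $0$ to $t$ and taking suprema produces $\Vert \partial_v f(t)\Vert_\infty \le \Vert \partial_v f_0\Vert_\infty + \int_0^t \Vert \partial_x f(s)\Vert_\infty\,ds$, which is the third estimate, and $\Vert \partial_x f(t)\Vert_\infty \le \Vert \partial_x f_0\Vert_\infty + \int_0^t \Vert \partial_x B(s)\Vert_\infty \Vert \partial_v f(s)\Vert_\infty\,ds$. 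Substituting the bound $\Vert \partial_v f(s)\Vert_\infty \le \Vert \partial_v f_0\Vert_\infty + s\sup_{\tau\in[0,s]}\Vert\partial_x f(\tau)\Vert_\infty$ into the last integral and absorbing $\Vert \partial_v f_0\Vert_\infty$ into a constant $C$ depending only on the data yields the stated second estimate.

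For part (b), the transport operator $\partial_t + \partial_x$ has characteristics $x(s) = x_0 + s$, so writing $\rho(s,y) = \int f(s,y,v)\,dv$ and integrating the ODE $\frac{d}{ds}B(s,x_0+s) = \rho(s,x_0+s)$ along the line through $(t,x)$ (which starts at $x_0 = x-t$) gives the representation $B(t,x) = B_0(x-t) + \int_0^t \rho(s,x-t+s)\,ds$. Since $f(s,y,\cdot)$ is supported in $\{|v|\le P(s)\}$, a set of measure $2P(s)$, part (a) gives $|\rho(s,y)| \le 2P(s)\Vert f(s)\Vert_\infty \le 2\Vert f_0\Vert_\infty P(s)$ uniformly in $y$; inserting this into the representation and taking the supremum over $x$ gives $\Vert B(t)\Vert_\infty \le \Vert B_0\Vert_\infty + 2\Vert f_0\Vert_\infty\int_0^t P(s)\,ds$, which is the claimed bound with $C = \max\{\Vert B_0\Vert_\infty, 2\Vert f_0\Vert_\infty\}$. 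The regularity $B \in C^1([0,T]\times\bfR)$ follows by differentiating the representation under the integral sign: the compact $v$-support of $f$ makes $\rho \in C^1$, and one checks that $\partial_x B = B_0'(x-t) + \int_0^t \partial_y\rho(s,x-t+s)\,ds$ and $\partial_t B = -B_0'(x-t) + \rho(t,x) - \int_0^t \partial_y\rho(s,x-t+s)\,ds$ are continuous, their sum recovering the equation.

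The computations are largely routine; the points demanding genuine care are the global existence and $C^1$ dependence of the Vlasov characteristic flow on $[0,T]$ (which relies on the boundedness of the given field $B$ to rule out finite-time escape of $V$) and the justification of differentiating under the integral sign for $\rho$ and $B$, which rests on the compactness of the $v$-support of $f$. I do not anticipate a deep obstacle here, since the nonlinear coupling of (\ref{SVM}) has been decoupled by treating $B$ (resp.\ $f$) as a given datum in each part.
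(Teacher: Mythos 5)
Your proposal is correct and follows essentially the same route as the paper: representation of $f$ along the Vlasov characteristics for the $L^\infty$ bound, differentiation of the Vlasov equation in $x$ and $v$ followed by integration along characteristics and substitution of the $\partial_v f$ bound into the $\partial_x f$ inequality, and the explicit transport representation $B(t,x)=B_0(x-t)+\int_0^t\int f(s,x-t+s,v)\,dv\,ds$ combined with the velocity-support bound for part (b). The only difference is that you spell out the global existence and $C^1$ dependence of the characteristic flow and the justification for differentiating under the integral sign, points the paper leaves implicit.
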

\renewcommand{\labelenumi}{\arabic{enumi}}

\begin{proof}
To prove the first result, we begin by introducing characteristics for the Vlasov equation.   Define the curves $X(s,t,x,v)$ and $V(s,t,x,v)$ as solutions to the system of ODEs
\begin{equation} \label{char} \left\{
\begin{gathered}
\frac{\partial X}{\partial s}=V(s,t,x,v), \\
X(t,t,x,v)=x, \\
\frac{\partial V}{\partial s}=B(s,X(s,t,x,v)), \\
V(t,t,x,v)=v
\end{gathered} \right.
\end{equation}
Often, the $(t,x,v)$ dependence of these curves will be suppressed so, for example, $X(s,t,x,v)$ will be denoted by $X(s)$ for brevity.
Then, the Vlasov equation can be expressed as a derivative along the characteristic curves by
\begin{eqnarray*}
\frac{d}{ds}\biggl (f(s, X(s), V(s)) \biggr )
& = & \partial_t f(s, X(s), V(s)) + \frac{\partial X}{\partial s}  \partial_x f(s, X(s), V(s))\\ && \qquad + \frac{\partial V}{\partial s} \partial_v f(s, X(s), V(s))\\
&=& \partial_t f(s, X(s), V(s)) + V(s) \partial_x f(s, X(s), V(s))\\ && \qquad + B(s,X(s))\partial_v f(s, X(s), V(s)) \\
&=& 0.
\end{eqnarray*}
Thus, we find $$f(t, X(t,t,x,v), V(t,t,x,v)) = f(0, X(0,t,x,v), V(0,t,x,v))$$
and $$f(t,x,v) = f_0(X(0,t,x,v), V(0,t,x,v)) \leq \Vert f_0 \Vert_\infty .$$ Finally, taking the supremum of this equality over $x,v \in \bfR$ yields the first estimate in $(a)$. The remaining estimates involve derivatives, so differentiating the Vlasov equation in $v$ yields
$$ \biggl ( \partial_t + v \partial_x + B\partial_v \biggr ) \partial_v f = -\partial_x f$$ and upon integrating along characteristics we find
\begin{equation}
\label{dvfn}
\partial_v f(t,x,v) = (\partial_v f_0)(X(0), V(0)) - \int_0^t (\partial_x f)(s,X(s),V(s)) \ ds.
\end{equation}
Taking supremums in $x$ and $v$ gives the last estimate for $(a)$.  The second estimate is derived similarly.  Differentiating with respect to $x$ in the Vlasov equation, we find $$\biggl ( \partial_t + v \partial_x + B\partial_v \biggr ) \partial_x f = -\partial_x B \partial_v f$$ and integrating as before along characteristics yields
\begin{equation}
\label{dxfn}
\partial_x f(t,x,v) = (\partial_x f_0)(X(0), V(0)) - \int_0^t (\partial_x B \partial_v f)(s,X(s),V(s)) \ ds.
\end{equation}
Taking the supremum on the right side, inserting the estimate on $\Vert \partial_v f (s) \Vert_\infty$ above, and using the bounded data gives us \begin{eqnarray*}
\vert \partial_x f(t,x,v) \vert & \leq & \Vert \partial_x f_0 \Vert_\infty + \int_0^t \Vert \partial_x B(s) \Vert_\infty \Vert \partial_v f(s) \Vert_\infty \ ds\\
& \leq & \Vert \partial_x f_0 \Vert_\infty + \int_0^t \Vert \partial_x B(s) \Vert_\infty  \left ( C + \int_0^s \Vert \partial_x f(\tau) \Vert_\infty \ d\tau \right) \ ds\\
& \leq & \Vert \partial_x f_0 \Vert_\infty + C \int_0^t \Vert \partial_x B(s) \Vert_\infty  \left ( 1 + s \sup_{\tau \in [0,s]} \Vert \partial_x f(\tau) \Vert_\infty \right) \ ds.
\end{eqnarray*}  Finally, taking the supremum gives the estimate for $\Vert \partial_x f (t) \Vert_\infty$ in $(a)$.

For the claim in $(b)$, we may use the method of characteristics to solve for $B$ in terms of $f$.  First, we write the differential equation for $B$ as a derivative along the curves $(s, x - t + s)$ so that $$ \frac{d}{ds} \biggl ( B(s, x - t + s) \biggr ) = \partial_t B(s, x-t+s) + \partial_x B(s,x-t+s) = \int f(s, x - t + s, v) \ dv. $$ Hence, integrating along these curves, we arrive at
\begin{equation}
\label{Brep}
B(t,x) = B_0(x-t) + \int_0^t \int f(s, x - t + s, v) \ dv \ ds.
\end{equation}
Using the bounded data, compact velocity support, and uniform bound on the particle density yields
\begin{eqnarray*}
\vert B(t,x) \vert & \leq & C + \int_0^t \int_{\{\vert v \vert : f \neq 0 \} } f(s, x - t + s, v) \ dv \ ds\\
& \leq & C \biggl ( 1 + \int_0^t \Vert f(s) \Vert_\infty P(s) \ ds \biggr ) \\
& \leq & C \biggl ( 1 + \int_0^t P(s) \ ds \biggr ).
\end{eqnarray*}
Finally, taking the supremum in $x$ concludes the proof.
\end{proof}

\section{Local-in-time existence of classical solutions}
With these \emph{a priori} estimates and classical convergence theorems from analysis, we now have the necessary tools to prove the local-in-time existence theorem.
\begin{theorem}[Existence of classical solutions]
\label{TMain}
Let $f_0 \in C_c^1(\bfR^2)$ and $B_0 \in C^1(\bfR)$ be given.  Then, there exists $T > 0 $ and a unique classical solution
$$ f  \in C^1([0,T]; C^1_c(\bfR^2)), \quad B \in C^1([0,T] \times \bfR)  $$ to (\ref{SVM}) satisfying the initial conditions (\ref{IC}).  Moreover, if we denote the maximal lifespan of the solution by $T^*$ then for $T^* < \infty$ we must have $$\limsup_{t \to T^*} \biggl ( \Vert \partial_x f(t) \Vert_\infty + \Vert \partial_v f(t) \Vert_\infty \biggr ) = \infty.$$
\end{theorem}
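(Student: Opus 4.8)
The plan is to construct the solution by a standard iteration scheme built on the two solution operators encoded in Lemma~\ref{L1}, and then to extract convergence from the uniform bounds that lemma provides. I would begin by setting $f^0(t,x,v) = f_0(x,v)$ and $B^0(t,x) = B_0(x)$, and define the iterates recursively: given $B^n \in C^1([0,T]\times\bfR)$, let $X^n, V^n$ solve the characteristic system \eqref{char} with field $B^n$ and set
$$f^{n+1}(t,x,v) = f_0\bigl(X^n(0,t,x,v), V^n(0,t,x,v)\bigr),$$
which solves the Vlasov equation with field $B^n$; then, given $f^{n+1}$, define $B^{n+1}$ by the representation \eqref{Brep} with $f$ replaced by $f^{n+1}$. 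By construction each $f^{n+1}$ is the Vlasov solution of part (a) and each $B^{n+1}$ is the field solution of part (b), so both parts of Lemma~\ref{L1} apply at every stage.

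The first task is to produce bounds on the iterates that are uniform in $n$ on a common interval $[0,T]$. The velocity support and field are coupled: backward integration of \eqref{char} gives $P^{n+1}(t) \le R_0 + \int_0^t \Vert B^n(s)\Vert_\infty\,ds$, where $R_0$ is the radius of the $v$-support of $f_0$, while part (b) gives $\Vert B^{n+1}(t)\Vert_\infty \le C(1 + \int_0^t P^{n+1}(s)\,ds)$. A Gronwall argument closes this pair and bounds $P^n$ and $\Vert B^n\Vert_\infty$ on any finite interval. Differentiating \eqref{Brep} in $x$ yields $\Vert \partial_x B^{n+1}(t)\Vert_\infty \le \Vert B_0'\Vert_\infty + C\int_0^t P^{n+1}(s)\Vert \partial_x f^{n+1}(s)\Vert_\infty\,ds$, which together with the two derivative estimates of part (a) produces a nonlinear system of integral inequalities for $\Vert\partial_x f^{n+1}\Vert_\infty$, $\Vert\partial_v f^{n+1}\Vert_\infty$, and $\Vert\partial_x B^{n+1}\Vert_\infty$. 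The superlinear coupling here forces me to choose $T$ small, and a continuous-induction argument then confines these norms below a fixed constant $M$ for all $n$ on $[0,T]$.

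With uniform $C^1$ bounds in hand, the crux is to show the sequence converges, which I would do by estimating the differences in the sup norm. Subtracting the representations \eqref{Brep} bounds $\Vert B^{n+1}(t)-B^n(t)\Vert_\infty$ by a time integral of $\Vert f^{n+1}(s)-f^n(s)\Vert_\infty$ over the common, uniformly bounded velocity support. To control the latter I must compare the characteristic flows generated by $B^n$ and $B^{n-1}$: a Gronwall estimate on \eqref{char} bounds $\vert X^n - X^{n-1}\vert + \vert V^n - V^{n-1}\vert$ by $C_T \int_0^t \Vert B^n(s)-B^{n-1}(s)\Vert_\infty\,ds$ using the uniform Lipschitz bound on the fields, and feeding this through the Lipschitz bound on $f_0$ yields $\Vert f^{n+1}(t)-f^n(t)\Vert_\infty \le C_T\int_0^t\Vert B^n(s)-B^{n-1}(s)\Vert_\infty\,ds$. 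Composing the two estimates introduces a factor proportional to $T^2$, so after possibly shrinking $T$ the map $B^n \mapsto B^{n+1}$ contracts in $C([0,T];L^\infty)$, whence $(f^n,B^n)$ is Cauchy and converges uniformly to a limit $(f,B)$. The uniform $C^1$ bounds guarantee the limit is Lipschitz and let me pass to the limit in \eqref{char}, in the characteristic representation of $f$, and in \eqref{Brep}, after which a bootstrap recovers $f\in C^1([0,T];C^1_c(\bfR^2))$ and $B\in C^1([0,T]\times\bfR)$. This characteristic comparison is the main technical obstacle, since the regularity of $f$ is only as good as that of the flow, which in turn depends on $\partial_x B$; uniqueness then follows from the same difference estimate applied to two putative solutions, yielding a Gronwall inequality that forces them to coincide.

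Finally, for the continuation criterion I would argue by contradiction: suppose $T^* < \infty$ but $\Vert\partial_x f(t)\Vert_\infty + \Vert\partial_v f(t)\Vert_\infty$ stays bounded as $t \to T^*$. The coupled $P$--$B$ inequalities above already bound $P(t)$ and $\Vert B(t)\Vert_\infty$ on $[0,T^*)$ independently of the derivatives, and the bound on $\Vert\partial_x f\Vert_\infty$ then bounds $\Vert\partial_x B\Vert_\infty$ through the differentiated form of \eqref{Brep}. Hence every $C^1$ norm remains finite up to $T^*$, so $f$, $B$, and their first derivatives extend continuously to $t = T^*$; taking $(f(T^*),B(T^*))$ as new data and reapplying the local existence result extends the solution beyond $T^*$, contradicting maximality. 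Therefore $\limsup_{t\to T^*}(\Vert\partial_x f(t)\Vert_\infty + \Vert\partial_v f(t)\Vert_\infty) = \infty$.
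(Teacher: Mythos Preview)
Your iteration scheme and the overall architecture match the paper's proof closely, and your treatment of the uniform bounds, the $C^0$ Cauchy estimates, uniqueness, and the continuation criterion are all correct. There is, however, a genuine gap at the sentence ``after which a bootstrap recovers $f\in C^1$ and $B\in C^1$.'' Uniform $C^1$ bounds on the iterates only force the limits $f$ and $B$ to be Lipschitz. With $B$ merely Lipschitz, ODE theory gives a Lipschitz flow $(X,V)$, hence $f=f_0(X(0),V(0))$ is Lipschitz, and then \eqref{Brep} returns $B$ Lipschitz again: the loop closes at Lipschitz regularity and never climbs to $C^1$. To get a $C^1$ flow you would need $B\in C^1$, which needs $f\in C^1$, which needs a $C^1$ flow --- the bootstrap is circular.

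The paper fills this gap by proving directly that the derivative sequences $\partial_x f^n$, $\partial_v f^n$, $\partial_x B^n$ are themselves uniformly Cauchy (its Section~3.4). This is the most laborious part of the argument: one uses the representations \eqref{dvfn} and \eqref{dxfn}, the Cauchy property of the characteristics $(X^n,V^n)$ already established, and the uniform $C^1$ bounds, to derive a recursive inequality of the form $\Vert\partial_x B^{n,m}(t)\Vert_\infty \le C\gamma_{n,m} + C\int_0^t \Vert\partial_x B^{n-1,m-1}(s)\Vert_\infty\,ds$ with $\gamma_{n,m}\to 0$, which iterates to zero. Once the derivatives converge uniformly, the limit is automatically $C^1$. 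As a minor aside, your $C^0$ Cauchy estimate (flow comparison plus Lipschitz bound on $f_0$) differs from the paper's, which instead writes a transport equation for $f^n-f^m$ and integrates along the $n$th characteristics using the bound on $\Vert\partial_v f^m\Vert_\infty$; both are valid, and the paper's version avoids shrinking $T$ since the resulting recursion sums to a convergent exponential series rather than requiring a contraction constant below one.
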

\begin{proof}
The outline of our proof generally follows the structure of \cite{KRST}, \cite{VPSSA1DLocal}, and \cite{VPSSA3DGeneral}.
We begin with the existence argument, which utilizes the method of successive approximations.  Hence, we define an iterative sequence of solutions to linear PDEs and show that it must converge to a solution of the nonlinear system (\ref{SVM}).  We take $f_0 \in C_c^1(\bfR^2)$ and $B_0 \in C^1(\bfR)$ and define
$$\begin{gathered}
f^0(t,x,v) = f_0(x,v),\\
B^0(t,x) = B_0(x).
\end{gathered}$$
Additionally, for every $n \in \bfN$, define $f^n \in C^1([0,\infty); C^1_c(\bfR^2))$ and $B^n \in C^1([0,\infty) \times \bfR)$ by solving the linear initial-value problems
\begin{equation}
\label{fnIC}
\left\{
\begin{gathered}
\partial_t f^n + v\partial_x f^n + B^{n-1}\partial_v f^n = 0\\
f^n(0,x,v) = f_0(x,v),
\end{gathered}
\right.
\end{equation}
and
\begin{equation}
\label{BnIC}
\left\{
\begin{gathered}
\partial_t B^n + \partial_x B^n = \int f^n dv\\
B^n(0,x) = B_0(x)
\end{gathered}
\right.
\end{equation}
respectively.  Notice that if $f^n \rightarrow f$ and $B^n \rightarrow B$ in the appropriate sense as $n \rightarrow \infty$ then $f$ and $B$ will satisfy (\ref{SVM}).  Now as in Lemma \ref{L1}, we further define the sequence of velocity support functions for every $t \geq 0, n \in \bfN$, $$P^n(t) = \sup \{ \vert v \vert : f^n(s, x, v) \neq 0, \ s \in [0,t], x \in \bfR \}.$$

\subsection{Uniform boundedness}
For the first portion of the proof, we consider $T > 0$ given and estimate on the time interval $[0,T]$.  In order to uniformly bound the sequence $B^n$, we utilize the estimates on the velocity support of $f^n$.  First, by Lemma \ref{L1}, we have the bound
\begin{equation}
\label{BP}
\Vert B^n(t) \Vert_\infty \leq C \left (1 + \int_0^t P^n(s) \ ds \right )
\end{equation} for every $n \in \bfN$.
To bound the velocity support in terms of the field, we express the solution of the Vlasov equation in terms of the associated characteristics.  For every $n \in \bfN$ define the characteristic curves $X^n(s,t,x,v)$ and $V^n(s,t,x,v)$ by
\begin{equation} \label{charn} \left\{
\begin{gathered}
\frac{\partial X^n}{\partial s}=V^n(s,t,x,v), \\
X^n(t,t,x,v)=x, \\
\frac{\partial V^n}{\partial s}=B^{n-1}(s,X^n(s,t,x,v)), \\
V^n(t,t,x,v)=v.
\end{gathered} \right.
\end{equation}
As before, the $(t,x,v)$ dependence of these curves will be suppressed for brevity. Then, using the argument in the proof of the first result of Lemma \ref{L1}, we find $$f^n(t, X^n(t,t,x,v), V^n(t,t,x,v)) = f^n(0, X^n(0,t,x,v), V^n(0,t,x,v))$$
and $$f^n(t,x,v) = f_0(X^n(0,t,x,v), V^n(0,t,x,v))$$ for every $n \in \bfN$.  
Inverting the characteristics using the identities
$$\begin{gathered}
x = X^n \left (t,0,X^n(0,t,x,v), V^n(0,t,x,v) \right)\\
v = V^n \left (t,0,X^n(0,t,x,v), V^n(0,t,x,v) \right)
\end{gathered}$$
within this last equality, then utilizing the compact support of $f_0$ and the definition of $P^n$ it follows that $$\sup_{x,v} \vert V^n(t,0,x,v) \vert \leq P^n(t)$$ for every $t \geq 0$. From the velocity characteristic equation (\ref{charn}), we can integrate to find $$V^n(t,0,x,v) = v + \int_0^t B^{n-1}(\tau, X^n(\tau)) \ d\tau$$ and thus $$ \vert V^n(t) \vert \leq \vert v \vert + \int_0^t \Vert B^{n-1}(\tau) \Vert_\infty \ d\tau.$$ Taking the supremum over characteristics along which $f^n \neq 0$, we find $$ P^n(t) \leq P^n(0) + \int_0^t \Vert B^{n-1}(\tau) \Vert_\infty \ d\tau.$$ We can now use (\ref{BP}) to arrive at a recursive bound for $P^n$, namely $$P^n(t) \leq P^n(0) + C \int_0^t \left (1 + \int_0^\tau P^{n-1}(s) \ ds \right ) \ d\tau.$$ Since $f_0$ has compact support, we know $P^n(0)$ is finite and constant in $n$, thus for every $n \in \bfN$ and on every bounded time interval $[0,T]$, $$P^n(t) \leq C \left ( 1 + \int_0^t P^{n-1}(\tau) \ d\tau \right )$$ where $C$ depends upon $f_0$ and $T$.  Using this recursive relation, we immediately deduce $$P^n(t) \leq C\left ( 1 +  \frac{t^n}{n!} \right ) \leq Ce^t \leq C_T.$$  Thus, on any bounded time interval $[0,T]$ the function $P^n(t)$ is uniformly bounded and from (\ref{BP}) so is $\Vert B^n(t) \Vert_\infty$.\\

\subsection{Uniform boundedness of derivatives}
Now we focus on obtaining uniform bounds on derivatives, sketching the proof for $x$ derivatives, with $t$ and $v$ derivatives following similarly.  From the definition of the iterates we can differentiate the representation for $B^n$ (\ref{Brep}) after integrating along characteristics with speed one, so that
\begin{equation}
\label{dxBf}
\partial_x B^n(t,x) = B^\prime_0(x-t) + \int_0^t \int \partial_x f^n(\tau, x-t+\tau, v) \ dv \ d\tau.
\end{equation}
Using the bound on the velocity support above, we arrive at
\begin{equation}
\label{DB}
\Vert \partial_x B^n(t) \Vert_\infty  \leq C_T \left ( 1 + \int_0^t \Vert \partial_x f^n(\tau) \Vert_\infty \ d\tau \right )
\end{equation}
for every $n \in \bfN$.  Using Lemma \ref{L1}(a) we also have the estimate $$ \Vert \partial_x f^n(t) \Vert_\infty \leq \Vert \partial_x f_0 \Vert_\infty + C \int_0^t \Vert \partial_x B^{n-1}(s) \Vert_\infty \left ( 1+ s \sup_{\tau \in [0,s]} \Vert \partial_x f^n(\tau) \Vert_\infty \right ) \ ds.$$ We combine these inequalities to find
\begin{eqnarray*}
\Vert \partial_x f^n(t) \Vert_\infty & \leq & C_T\left [1 +  \int_0^t \left ( 1 + \int_0^s \Vert \partial_x f^{n-1}(\tau) \Vert_\infty \ d\tau \right ) \left ( 1+ s \sup_{\tau \in [0,s]} \Vert \partial_x f^n(\tau) \Vert_\infty \right ) \ ds \right ]\\
& \leq & C_T\left [1 +  \int_0^t \left ( 1+ s \sup_{\tau \in [0,s]} \Vert \partial_x f^{n-1}(\tau) \Vert_\infty \right )\left ( 1+ s \sup_{\tau \in [0,s]} \Vert \partial_x f^n(\tau) \Vert_\infty \right ) \ ds \right ].
\end{eqnarray*}
Now, let $$F^n(t) = \max_{1\leq k \leq n} \sup_{\tau \in [0,t]} \Vert \partial_x f^k(\tau) \Vert_\infty.$$  With this definition, the previous inequality yields
\begin{equation}
\label{Fn}
F^n(t) \leq C_T \left (1 + \int_0^t [1 + s F^n(s)]^2 \ ds \right ).
\end{equation}
Hence, by induction $F^n(t) \leq F(t)$ for every $n \in \bfN$, $t \in [0,T^*)$, where $F(t)$ is the maximal solution of the integral equation corresponding to (\ref{Fn}):
\begin{equation}
\label{F}
F(t) = C_{T^*} \left (1 + \int_0^t [1 + s F(s)]^2 \ ds \right ).
\end{equation}
This solution exists on some time interval $[0,T^*)$ with $T^* > 0$ determined by $f_0$ and  $B_0$.  This yields a uniform bound on $\Vert \partial_x f^n(t) \Vert_\infty$ on $[0,T]$ for every $n \in \bfN$ and $T < T^*$.  Additionally, $\Vert \partial_x B^n(t) \Vert_\infty$ is bounded on the same interval by (\ref{DB}). The argument can be repeated in the same manner to bound $\Vert \partial_t B(t) \Vert_\infty$, $\Vert \partial_t f(t) \Vert_\infty$, and $\Vert \partial_v f(t) \Vert_\infty$ on the same time interval.\\

\subsection{Uniform Cauchy property}
For the remainder of the proof, we will estimate on the bounded time interval $[0,T]$, where $T \in (0,T^*)$.  To show that the sequences $f^n$ and $B^n$ are Cauchy, we directly estimate differences of terms of the sequences. Let $n,m \in \mathbb{N}$ be given and define the functions $$f^{n,m}(t,x,v) = f^{n}(t,x,v) - f^{m}(t,x,v)$$ and $$B^{n,m}(t,x) = B^{n}(t,x) - B^{m}(t,x).$$ Since (\ref{fnIC}) holds for any $n \in \mathbb{N}$, we subtract the $f^{m}$ equation from that for $f^n$ to find
\begin{eqnarray*}
0 & = &  \partial_t f^{n,m} + v \partial_x f^{n,m} + B^{n-1} \partial_v f^{n} - B^{m-1} \partial_v f^{m}\\
& = &  \partial_t f^{n,m} + v \partial_x f^{n,m} + B^{n-1} \partial_v f^{n} - B^{n-1} \partial_v f^{m} + B^{n-1} \partial_v f^{m}- B^{m-1} \partial_v f^{m}\\
& = &  \partial_t f^{n,m} + v \partial_x f^{n,m} + B^{n-1} \partial_v f^{n,m} + B^{n-1,m-1} \partial_v f^{m}
\end{eqnarray*}
so that by rearranging terms this becomes $$ \partial_t f^{n,m} + v \partial_x f^{n,m} + B^{n-1} \partial_v f^{n,m} = - B^{n-1,m-1} \partial_v f^{m}.$$
As for the Vlasov equation, the left side of this equation can be expressed as a derivative along characteristic curves (\ref{charn}) as
$$\frac{d}{ds} f^{n,m} \bigl (s,X^{n-1}(s),V^{n-1}(s) \bigr )  = - \biggl ( B^{n-1, m-1}\partial_vf^{m} \biggr ) \bigl (s,X^{n-1}(s),V^{n-1}(s) \bigr ).$$ Now, integrating both sides with respect to $s$, we find
\begin{eqnarray*} f^{n,m}(t,x,v) & = & f^{n,m} \bigl (0, X^{n-1}(0), V^{n-1}(0) \bigr )\\
& & \qquad -  \int_0^t \bigl (B^{n-1,m-1}\partial_vf^{m} \bigr ) \bigl (s,X^{n-1}(s),V^{n-1}(s) \bigr ) ds
\end{eqnarray*}
and since both $f^n$ and $f^m$ satisfy the same initial condition (\ref{fnIC}), it follows that $f^{n,m}(0,x,v) = 0$.  Therefore, the equality simplifies to
$$f^{n,m}(t,x,v) = - \int_0^t \bigl (B^{n-1,m-1}\partial_vf^{m} \bigr ) \bigl (s,X^{n-1}(s),V^{n-1}(s) \bigr ) ds.$$ We know $\Vert \partial_v f^{m}(s) \Vert_\infty$ is uniformly bounded (from Section $3.2$) so we can bound the right side to find
\begin{equation}
\label{fBn}
\Vert f^{n,m}(t) \Vert_\infty \leq C \int_0^t \Vert B^{n-1,m-1}(s) \Vert_\infty \ ds.
\end{equation}
Now, since (\ref{BnIC}) must also hold for all $n \in \bfN$, we subtract the $B^{m}$ equation from that for $B^{n}$ and arrive at
$$\partial_t B^{n,m} + \partial_x B^{n,m} = \int f^{n,m} \ dv$$ which we can write as a derivative along curves with slope one as
$$\frac{d}{ds} B^{n,m}(s, x - t + s) = \int f^{n,m}(s, x - t + s, v) \ dv.$$  Integrating in $s$ and using the initial conditions (\ref{BnIC}) to conclude that $B^{n,m}(0,x) = 0$ for every $x \in \bfR$, this becomes
$$B^{n,m}(t,x) = \int_0^t \int f^{n,m}(s, x - t + s, v)\ dvds.$$  Since the velocity support of $f$, denoted by $P^n$, is uniformly bounded from Section $3.11$, we can bound $B^{n,m}$ by
\begin{equation}
\label{Bfn}
\Vert B^{n,m}(t) \Vert_\infty \leq C_T \Vert f^{n,m}(t) \Vert_\infty
\end{equation}
Finally, combining (\ref{fBn}) and (\ref{Bfn}) yields
$$ || B^{n,m}(t)  ||_\infty \leq C \int_0^t ||B^{n-1,m-1}(s)||_\infty ds$$ for any $n,m \in \mathbb{N}$. Now consider $m=n-1$ so that the previous equation becomes
$$ || B^{n,n-1}(t)  ||_\infty \leq C \int_0^t ||B^{n-1,n-2}(s)||_\infty ds$$  for any $n \in \bfN$ with $n \geq 2$. Using this recursive relation for successive differences, we deduce
$$ || B^{n,n-1}(t) ||_\infty \leq C\Vert B^{1,0}(t) \Vert_\infty \frac{t^{n-1}}{(n-1)!}.$$ Since $\Vert B^1(t) \Vert_\infty$ and $\Vert B^0(t) \Vert_\infty$ are bounded, we have $$\Vert B^{n,m}(t) \Vert_\infty \leq \sum_{k=m+1}^n \Vert B^{k,k-1}(t) \Vert_\infty \leq C \sum_{k=m+1}^n \frac{t^{k-1}}{(k-1)!} \rightarrow 0$$ as $n,m \rightarrow \infty$ for every $t \in [0,T]$. Therefore, $B^n(t,x)$ is uniformly Cauchy for all $t \in [0,T]$ and $x \in \bfR$. Similarly, using (\ref{fBn}) we see that $ \Vert f^{n,m}(t) \Vert_\infty \rightarrow 0$ as $n \rightarrow \infty$ for every $t \in [0,T]$ and it follows that $f^n(t,x,v)$ is uniformly Cauchy for every $t \in [0,T]$ and $x,v \in \bfR$.\\

To conclude this section, we use the newly discovered boundedness of derivatives and Cauchy property of the field to show that the characteristics given by (\ref{charn}) are uniformly Cauchy.  First, we let $m, n \in \bfN$ be given and define $$X^{n,m}(s) = X^n(s) - X^m(s)$$ with the analogous definition for $V^{n,m}(s).$ Upon integrating the ODEs of (\ref{charn}) and subtracting the equations for $X^m$ from those of $X^n$, we find
\begin{equation}
\label{Xnm}
X^{n,m}(s) = - \int_s^t V^{n,m}(\tau) \ d \tau$$ and thus $$\Vert X^{n,m}(s) \Vert_\infty \leq \int_s^t \Vert V^{n,m}(\tau) \Vert_\infty d\tau.
\end{equation}
Doing the same for the $V^n(s)$ equations, we simply use the Mean Value Theorem to find
\begin{eqnarray*}
\vert V^{n,m}(s) \vert & = & \left \vert \int_s^t  \biggl ( B^n(\tau, X^n(\tau) ) - B^m(\tau,X^m(\tau)) \biggr ) \ d\tau \right \vert\\
& = &   \left \vert \int_s^t \biggl ( B^n(\tau, X^n(\tau) ) - B^n(\tau, X^m(\tau) ) + B^n(\tau, X^m(\tau) ) - B^m(\tau,X^m(\tau)) \biggr ) \ d\tau \right \vert\\
& \leq & \int_s^t \biggl ( \Vert \partial_x B^n (\tau) \Vert_\infty \vert X^n(\tau) - X^m(\tau) \vert + \Vert B^{n,m} (\tau) \Vert_\infty \biggr ) \ d\tau.
\end{eqnarray*}
Since field derivatives are bounded from Section $3.2$, this implies
\begin{equation}
\label{Vnm}
\Vert V^{n,m}(s) \Vert_\infty \leq C_T \int_s^t \biggl (\Vert X^{n,m}(\tau)\Vert_\infty + \Vert B^{n,m} (\tau) \Vert_\infty \biggr ) \ d\tau.
\end{equation}
Finally, we let $$Z^{n,m}(s) = \Vert X^{n,m}(s) \Vert_\infty + \Vert V^{n,m}(s) \Vert_\infty$$ and add (\ref{Xnm}) and (\ref{Vnm}) together to find $$Z^{n,m}(s) \leq C\int_s^t Z^{n,m}(\tau) \ d\tau + C\int_s^t \Vert B^{n,m} (\tau) \Vert_\infty \ d\tau.$$ We then use Gronwall's Inequality (cf. \cite{Evans}) to deduce the bound $$Z^{n,m}(s) \leq C\int_s^t \Vert B^{n,m} (\tau) \Vert_\infty \ d\tau.$$ Since we know $\Vert B^{n,m}(\tau) \Vert_\infty \to 0$ uniformly for $\tau \in [0,T]$, this implies that $Z^{n,m}(s)$ does so as well, and finally that $X^n$ and $V^n$ are uniformly Cauchy.\\

\subsection{Uniform Cauchy property of derivatives}
In order to prove that the resulting limits of $f^n$ and $B^n$ are differentiable, we will show that the sequence of derivatives $\partial_{(t,x,v)} f^n$ and $\partial_{(t,x)} B^n$ are Cauchy as well.
We begin by bounding $\Vert \partial_x B^{n,m}(t) \Vert_\infty$.  Using the representation from (\ref{dxBf}) and subtracting the equation for $\partial_x B^{m}$ from that for $\partial_x B^{n}$ we obtain
$$ \partial_x B^{n,m}(t,x) = \int_0^t \int \partial_x f^{n,m}(\tau,x - t + \tau, v) \ dv \ d\tau.$$
Using the boundedness of the velocity support from Section $3.1$, this relationship implies
\begin{equation}
\label{dxBnm}
\Vert \partial_x B^{n,m}(t) \Vert_\infty  \leq C_T \int_0^t \Vert \partial_x f^{n,m}(s) \Vert_\infty \ ds.
\end{equation}
Hence, we must estimate derivatives of $f^{n,m}$ also.  We first use the representation for $\partial_v f^n$ from (\ref{dvfn}) and subtract this equation for $\partial_v f^{m}$ from that of $\partial_v f^{n}$ to find
\begin{eqnarray*}
\vert \partial_v f^{n,m}(t,x,v) \vert & \leq & \vert \partial_v f_0(X^n(0),V^n(0)) -  \partial_v f_0(X^m(0),V^m(0)) \vert \\
& & + \int_0^t \vert \partial_x f^n(s,X^n(s),V^n(s)) - \partial_x f^m(s,X^m(s),V^m(s)) \vert\\
& \leq & \vert \partial_v f_0(X^n(0),V^n(0)) -  \partial_v f_0(X^m(0),V^m(0)) \vert \\
& & + \int_0^t \vert \partial_x f^n(s,X^n(s),V^n(s)) - \partial_x f^n(s,X^m(s),V^m(s)) \vert\\
& & + \int_0^t \Vert \partial_x f^{n.m}(s) \Vert_\infty \ ds
\end{eqnarray*}
Here, the first and second terms tend to zero uniformly as $n,m \to \infty$ using the Cauchy property of characteristics and the uniform boundedness of derivatives of the iterates. Thus, taking supremums, we find
\begin{equation}
\label{dvfnm}
\Vert \partial_v f^{n,m}(t) \Vert_\infty \leq \alpha_{n,m} + \int_0^t \Vert \partial_x f^{n,m}(s) \Vert_\infty \ ds
\end{equation}
where $\alpha_{n,m} \to 0$ uniformly on $[0,T]$ as $n,m \to \infty$.

Similarly, we use the representation for $\partial_x f^n$ from (\ref{dxfn}) and take the difference of this equation for $\partial_x f^{n}$ and $\partial_x f^{m}$ to find
\begin{eqnarray*}
\vert \partial_x f^{n,m}(t,x,v) \vert & = & \left \vert \partial_x f_0 (X^n(0),V^n(0)) - \partial_x f_0 ( X^m(0), V^m(0)) \right. \\
& & + \int_0^t (\partial_x B^{n-1} \partial_v f^n)(s,X^n(s),V^n(s)) \ ds \\
& & \left. - \int_0^t (\partial_x B^{m-1} \partial_v f^m)(s,X^m(s),V^m(s)) \ ds \right \vert\\
& \leq & \vert \partial_x f_0 (X^n(0),V^n(0)) - \partial_x f_0 ( X^m(0), V^m(0)) \vert\\
& &  + \int_0^t \biggl [ \vert (\partial_x B^{n-1} \partial_v f^n)(s,X^n(s),V^n(s)) - (\partial_x B^{m-1} \partial_v f^n)(s,X^n(s),V^n(s)) \vert \\
& & + \vert (\partial_x B^{m-1} \partial_v f^n)(s,X^n(s),V^n(s)) - (\partial_x B^{m-1} \partial_v f^n)(s,X^m(s),V^m(s))\vert \\
& & + \vert (\partial_x B^{m-1} \partial_v f^n)(s,X^m(s),V^m(s)) - (\partial_x B^{m-1} \partial_v f^m)(s,X^m(s),V^m(s))\vert \biggr ]  ds\\
& \leq & \vert \partial_x f_0 (X^n(0),V^n(0)) - \partial_x f_0 ( X^m(0), V^m(0)) \vert\\
& & + \int_0^t \biggl [ \biggl ( \vert \partial_x B^{n-1} - \partial_x B^{m-1} \vert \cdot \vert\partial_v f^n \vert \biggr )(s,X^n(s),V^n(s)) \\
& & + \vert (\partial_x B^{m-1} \partial_v f^n)(s,X^n(s),V^n(s)) - (\partial_x B^{m-1} \partial_v f^n)(s,X^m(s),V^m(s))\vert \\
& & + \biggl (\vert \partial_x B^{m-1} \vert \cdot \vert \partial_v f^n - \partial_v f^m \vert \biggr )(s,X^n(s),V^n(s)) \biggr ] \ ds\\
& \leq & \vert \partial_x f_0 (X^n(0),V^n(0)) - \partial_x f_0 ( X^m(0), V^m(0)) \vert\\
& & + \int_0^t \vert (\partial_x B^{m-1} \partial_v f^n)(s,X^n(s),V^n(s)) - (\partial_x B^{m-1} \partial_v f^n)(s,X^m(s),V^m(s))\vert \ ds \\
& & + \int_0^t  \left ( \Vert \partial_x B^{n-1,m-1}(s) \Vert_\infty \Vert \partial_v f^{n}(s) \Vert_\infty +\Vert \partial_x B^{m-1}(s) \Vert_\infty \Vert \partial_v f^{n,m}(s) \Vert_\infty  \right ) \ ds.\\
\end{eqnarray*}
From Section $3.2$ we have uniform bounds on $ \Vert \partial_v f^{n}(s) \Vert_\infty $ and $ \Vert \partial_x B^{m-1}(s) \Vert_\infty$.  Thus, the first and second terms tend to zero uniformly as $n,m \to \infty$ and the last line simplifies.  With this, we have
$$\Vert \partial_x f^{n,m}(t) \Vert_\infty  \leq \beta_{n,m} + C_T \int_0^t \left [ \Vert \partial_x B^{n-1,m-1}(s) \Vert_\infty +  \Vert \partial_v f^{n,m}(s) \Vert_\infty \right ] \ ds$$
where $\beta_{n,m} \to 0$ uniformly on $[0,T]$ as $n,m \to \infty$.
Using (\ref{dvfnm}), this inequality becomes
$$\Vert \partial_x f^{n,m}(t) \Vert_\infty  \leq C_T\left (\alpha_{n,m} + \beta_{n,m} +  \int_0^t \left [ \Vert \partial_x B^{n-1,m-1}(s) \Vert_\infty +  \int_0^s \Vert \partial_x f^{n,m}(\tau) \Vert_\infty d\tau \right ] ds \right )$$
which simplifies to (with $\gamma_{n,m} = \alpha_{n,m} + \beta_{n,m}$)
$$\Vert \partial_x f^{n,m}(t) \Vert_\infty  \leq C_T \left (\gamma_{n,m} + \int_0^t \Vert \partial_x B^{n-1,m-1}(s) \Vert_\infty \ ds +  t\int_0^t \Vert \partial_x f^{n,m}(\tau) \Vert_\infty  \ d\tau \right ). $$
Using Gronwall's Inequality we find
\begin{equation} \label{dxfnm}
\Vert \partial_x f^{n,m}(s) \Vert_\infty \leq  C_T \left ( \gamma_{n,m} +   \Vert \partial_x B^{n-1,m-1}(s) \Vert_\infty \right ) .
\end{equation}
Combining (\ref{dxBnm}) and (\ref{dxfnm}) yields
\begin{equation}
\label{Bnm}
\Vert \partial_x B^{n,m}(t) \Vert_\infty  \leq C\left ( \gamma_{n,m} +  \int_0^t \Vert \partial_x B^{n-1,m-1}(s) \Vert_\infty \ ds \right )
\end{equation}
for any $n,m \in \mathbb{N}$. We can iterate (\ref{Bnm}) to arrive at
$$\Vert \partial_x B^{n,m}(t) \Vert_\infty  \leq \gamma_{n,m} \sum_{j=0}^{k-1} \frac{(Ct)^j}{j!} +  \frac{C^{k-1}}{(k-1)!}\int_0^t (t -s)^{k-1} \Vert \partial_x B^{n-k,m-k}(s) \Vert_\infty \ ds $$  for any $k \in \bfN$ with $1 \leq k \leq \min\{n,m\}$.
Thus we have $$\Vert \partial_x B^{n,m}(t) \Vert_\infty  \leq C_T \gamma_{n,m} + 2\frac{(CT)^k}{k!} \sup_{s \in [0,T]} \Vert \partial_x B_0(s) \Vert_\infty $$ for $k = \min\{n,m\} > 0$. Therefore, $\partial_x B^n(t,x)$ is uniformly Cauchy for all $t \in [0,T]$ and $x \in \bfR$. Similarly, using (\ref{dvfnm}) and (\ref{dxfnm}) we see that $ \Vert \partial_{(x,v)} f^{n,m}(t) \Vert_\infty \rightarrow 0$ as $n,m \rightarrow \infty$ for every $t \in [0,T]$ and it follows that $\partial_{(x,v)} f^n(t,x,v)$ is uniformly Cauchy for every $t \in [0,T], x,v \in \bfR$.  The same argument can then be used to show that $\partial_t f^n(t,x,v)$ and $\partial_t B(t,x)$ are also uniformly Cauchy.\\

\subsection{Properties of limiting functions}
Assembling the previous steps, we may prove that our sequences and their derivatives converge to solutions of (\ref{SVM}).  Let $T^*$ again denote the maximal existence time of the solution to (\ref{F}).  Using the Cauchy property of $f^n$, $X^n$, $V^n$, $B^n$, and their derivatives, we may conclude that each sequence of functions converges uniformly on the time interval $[0,T]$ for any $T < T^*$ and uniformly for $x,v \in \bfR, n \in \bfN$.  Since the space of continuous functions is complete with respect to the norm of uniform convergence, we may conclude that these sequences converge to continuous functions.  Therefore, let us define $f \in C( [0,T] \times \bfR^2 )$ by $$f(t,x,v) = \lim_{n \to \infty} f^n(t,x,v) = \lim_{n \to \infty} f_0 \bigl ( X^n(0,t,x,v), V^n(0,t,x,v) \bigr ).$$  Then, we similarly define the field $$B(t,x) = \lim_{n \to \infty} B^n(t,x)= \lim_{n \to \infty} \biggl (B_0(x-t) + \int_0^t \int f^n(s, x - t + s, v) \ dv \ ds \biggr ).$$
Thus, using the uniform convergence of $f^n(t,x,v)$, we can pass the limit inside these integrals to find for every $t \in [0,T]$, $x \in \bfR$
\begin{equation}
\label{Bfinal}
B(t,x) = B_0(x-t)  + \int_0^t \int f(s,x-t+s,v) \ dv \ ds.
\end{equation}
Further, we define $$X = \lim_{n \to \infty} X^n, \qquad V = \lim_{n \to \infty} V^n.$$ It follows from (\ref{charn}) and the uniform field bound that
\begin{equation} \label{charL} \left\{
\begin{gathered}
\frac{\partial X}{\partial s}=V(s,t,x,v), \\
\frac{\partial V}{\partial s}=B(s,X(s,t,x,v)), \\
X(t,t,x,v)=x, \\
V(t,t,x,v)=v,
\end{gathered} \right.
\end{equation}
and by the continuity of $f_0$, we see that $$f(t,x,v) = f_0(X(0,t,x,v), V(0,t,x,v)),$$ whence for every $s \in [0,t]$,
\begin{equation}
\label{fs}
f(t,x,v) = f(s,X(s,t,x,v), V(s,t,x,v)).
\end{equation}
Since the approximating sequence of derivatives (e.g., $\partial_x f^n$) of these functions converge uniformly, this implies that $f$ and $B$ are $C^1$ and their derivatives are necessarily the limits of the respective sequences, meaning $$\begin{gathered}\partial_x f = \lim_{n \to \infty} \partial_x f^n, \quad \partial_v f = \lim_{n \to \infty} \partial_v f^n, \quad \partial_t f = \lim_{n \to \infty} \partial_t f^n,\\ \partial_x B = \lim_{n \to \infty} \partial_x B^n,\qquad \partial_t B = \lim_{n \to \infty} \partial_t B^n. \end{gathered}$$
Furthermore, the uniform bound on $P^n(t)$ implies the compact $x$ and $v$ support of $f(t,x,v)$ for every $t \in [0,T]$.  Thus, for every $T < T^*$, $f \in C^1([0,T]; C_c^1(\bfR^2))$. Using (\ref{Bfinal}) and taking derivatives, we see that the field equation for $B$ of (\ref{SVM}) holds.  Upon taking the derivative with respect to $s$ in (\ref{fs}) and using (\ref{charL}), we see that the Vlasov equation of (\ref{SVM}) holds.  Additionally, the solutions (\ref{fs}) and (\ref{Bfinal}) satisfy the initial conditions (\ref{IC}).  Therefore, the continuously differentiable functions $f$ and $B$ satisfy (\ref{SVM}) with (\ref{IC}).  Hence, we have shown the existence of such a solution $(f,B)$. Notice that this argument can be continued to a time interval of arbitrary size so long as $\Vert \partial_x f(t) \Vert_\infty$ and $\Vert \partial_v f(t) \Vert_\infty$ remain bounded. \\

\subsection{Uniqueness of solutions}
Finally, we turn to uniqueness.  Let us first suppose that the functions $(f^{(1)}, B^{(1)})$ and $(f^{(2)}, B^{(2)})$ are two solutions to the system (\ref{SVM}) on some time interval $[0,T]$ which satisfy (\ref{IC}).  Also, for every $t \in [0,T]$ and $x,v \in \mathbb{R}$ define the difference of these solutions $$\begin{gathered} f(t,x,v) = f^{(1)}(t,x,v)-f^{(2)}(t,x,v)\\ B(t,x) = B^{(1)}(t,x)-B^{(2)}(t,x). \end{gathered}$$

Then, we subtract the first equation of (\ref{SVM}) for $f^{(2)}$ from that for $f^{(1)}$ to find
\begin{eqnarray*}
0 & = &  \partial_t f + v \partial_x f + B^{(1)} \partial_v f^{(1)} - B^{(2)} \partial_v f^{(2)}\\
& = & \partial_t f + v \partial_x f + B^{(1)} \partial_v f^{(1)} - B^{(1)} \partial_v f^{(2)} + B^{(1)} \partial_v f^{(2)} - B^{(2)} \partial_v f^{(2)}\\
& = &  \partial_t f + v \partial_x f + B^{(1)} \partial_v f - B \partial_v f^{(2)}
\end{eqnarray*}
so that by rearranging terms this becomes $$ \partial_t f+ v\partial_xf + B^{(1)}\partial_vf =  B\partial_vf^{(2)}.$$
The left side of this equation can be expressed as a derivative along characteristic curves as
$$\frac{d}{ds} f \bigl (s,X^{(1)}(s),V^{(1)}(s) \bigr )  = \biggl ( B\partial_vf^{(2)} \biggr ) \bigl (s,X^{(1)}(s),V^{(1)}(s) \bigr )$$ where the curves $X^{(1)}(s)$ and $V^{(1)}(s)$ are defined by the, now well-known, system of characteristic ordinary differential equations
\begin{equation} \label{charU} \left\{
\begin{gathered}
\frac{\partial X^{(1)}}{\partial s}=V^{(1)}(s,t,x,v), \\
\frac{\partial V^{(1)}}{\partial s}=B^{(1)}(s,X^{(1)}(s,t,x,v)), \\
X^{(1)}(t,t,x,v)=x, \\
V^{(1)}(t,t,x,v)=v.
\end{gathered} \right.
\end{equation}
Here, we have abbreviated $X^{(1)}(s,t,x,v)$ by $X^{(1)}(s)$ and similarly for $V^{(1)}(s)$.  Now, integrating both sides of the above equation with respect to $s$, we find
$$f(t,x,v) - f \bigl (0, X^{(1)}(0), V^{(1)}(0) \bigr ) = \int_0^t \bigl (B\partial_vf^{(2)} \bigr ) \bigl (s, X^{(1)}(s), V^{(1)}(s) \bigr ) ds$$ and since both solutions satisfy the same initial condition (\ref{IC}), as before this implies $f(0,x,v) \equiv 0$.  Therefore, the equality simplifies to
$$f(t,x,v) =\int_0^t \bigl (B\partial_vf^{(2)} \bigr ) \bigl (s, X^{(1)}(s), V^{(1)}(s) \bigr ) ds.$$ Since $f^{(2)}$ is a solution, we know $\Vert \partial_v f^{(2)}(s) \Vert_\infty$ is bounded for $s \in [0,t]$ and we can bound the right side to find
\begin{equation}
\label{fB}
\Vert f(t) \Vert_\infty \leq C_T \int_0^t \Vert B(s) \Vert_\infty \ ds.
\end{equation}
Now, by subtracting the $B^{(2)}$ equation from the $B^{(1)}$ equation, we arrive at $$\partial_t B + \partial_x B = \int f \ dv$$ which we can write as a derivative along curves with slope one as $$\frac{d}{ds} B(s, x - t + s) = \int f(s, x - t + s, v) \ dv.$$  Integrating in $s$ and using (\ref{IC}) to conclude that $B(0,x) \equiv 0$, this becomes
$$B(t,x) = \int_0^t \int f(s, x - t + s, v)\ dvds.$$  Since both $f^{(1)}$ and $f^{(2)}$ are solutions, the velocity support of $f$ is controlled and we can bound $B$ by
\begin{equation}
\label{Bf}
\Vert B(t) \Vert_\infty \leq C_T \sup_{s \in [0,t]} \Vert f(s) \Vert_\infty
\end{equation}
Finally, combining (\ref{fB}) and (\ref{Bf}) yields 
$$ || B(t)  ||_\infty \leq C_T \int_0^t \sup_{\tau \in [0,s]} ||B(\tau)||_\infty ds$$ 
and
$$   \sup_{s \in [0,t]} || B(s)  ||_\infty \leq C_T \int_0^t \sup_{\tau \in [0,s]} ||B(\tau)||_\infty ds$$ 
Again using Gronwall's Inequality, we deduce $ || B(t) ||_\infty \leq 0$ for all $t \in [0,T]$ which implies that $B(t,x) = 0$ for every $t \in [0,T]$, $x \in \bfR$. Similarly, using (\ref{fB}) we see that $f(t,x,v) = 0$ for every $t \in [0,T]$, $x,v \in \bfR$.  Finally, this implies that $f^{(1)} \equiv f^{(2)}$ and $B^{(1)} \equiv B^{(2)}$, and hence there can be at most one such solution.
\end{proof}

\section{Global Existence}
Now that we know smooth solutions exist on some time interval, the next logical question is whether this can be extended for all times.  Unfortunately, a complete answer remains unknown.  The fundamental issue is that the Vlasov characteristics in the density equation propagate at an uncontrollable speed $v$ and hence, are able to intersect the field characteristics which propagate with speed $1$.  Though we cannot current prove that all initial data launch a global-in-time solution, we can provide an answer for certain classes of initial data.  In what follows we will use the unidirectional nature of the transport operator in (\ref{SVM}) to answer the question of global existence in the affirmative for a class of initial data $(f_0,B_0)$.  Then, we shall utilize a new scaling invariance of the system to apply the global existence result to additional solutions.  We begin with the former result:

\begin{theorem}
\label{T1}
Let $f_0 \in C^1_c(\bfR^2)$ and $B_0 \in C^1(\bfR)$ be nonnegative with $\supp(f_0(x,\cdot)) \subset (1,\infty)$ for all $x \in \bfR$.  Then, for all $T > 0$ there exist $f \in C^1([0,T] \times \bfR^2)$ and $B \in C^1([0,T]\times\bfR)$ satisfying (\ref{SVM}) with (\ref{IC}) and $f(t,\cdot,\cdot)$ compactly supported for every $t \in [0,T]$.
\end{theorem}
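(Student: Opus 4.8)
The plan is to run a continuation argument on top of Theorem \ref{TMain}: assuming the maximal lifespan $T^*$ is finite, I would show that $\Vert\partial_x f(t)\Vert_\infty + \Vert\partial_v f(t)\Vert_\infty$ stays bounded as $t\to T^*$, contradicting the blow-up criterion. The hypotheses enter through two sign facts. Since $f(t,x,v)=f_0(X(0),V(0))\ge 0$, the representation (\ref{Brep}) gives $B(t,x)=B_0(x-t)+\int_0^t\int f\,dv\,ds\ge 0$, and feeding this into the velocity characteristic $\partial_s V=B\ge 0$ shows $V$ is nondecreasing along every characteristic, so that $V(s)\ge a$ for all $s$, where $a>1$ is the infimum of the $v$-support of $f_0$ (finite and strictly above $1$ by the hypothesis $\supp(f_0(x,\cdot))\subset(1,\infty)$ together with the compact support of $f_0$). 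This is the precise sense in which the particles ``outrun'' the field: along any Vlasov characteristic $\partial_s X=V\ge a>1$, whereas the field characteristics travel at unit speed.

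First I would convert this separation into a uniform bound on $B$. Let $N(t)=\iint_{x>x_0+t}f(t,x,v)\,dx\,dv$ be the mass to the right of a field characteristic. Writing the Vlasov equation in divergence form and applying Leibniz' rule gives $N'(t)=\int(v-1)f(t,x_0+t,v)\,dv\ge 0$, since $v\ge a>1$ on $\supp f$. As $N(t)\le M:=\iint f_0\,dx\,dv$, integrating in time yields $\int_0^\infty\int(v-1)f(t,x_0+t,v)\,dv\,dt\le M$, and because $v-1\ge a-1$ there, $\int_0^\infty\int f(t,x_0+t,v)\,dv\,dt\le M/(a-1)$ uniformly in $x_0$. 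Integrating the field equation along its characteristic then gives $0\le B(t,x)\le\Vert B_0\Vert_\infty+M/(a-1)=:C_0$ for all $(t,x)$. With $B$ bounded, $P(t)\le P(0)+C_0 t$ from the velocity characteristic, so the spatial density $\rho(t,x)=\int f\,dv$ obeys $\Vert\rho(t)\Vert_\infty\le 2\Vert f_0\Vert_\infty P(t)\le C_T$ on any $[0,T]$.

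The crux is the growth of the first derivatives, which in the local theory obeyed the Riccati-type inequality (\ref{Fn}) and was therefore controlled only for short time; the difficulty is the circular dependence in which $\Vert\partial_x B\Vert_\infty$ is estimated through $\int\Vert\partial_x f\Vert_\infty$ and conversely. I would break this loop using the strict lower bound $V\ge a>1$ twice. Differentiating the Vlasov equation and restricting to a characteristic, the quantities $g(s)=(\partial_x f)(s,X(s),V(s))$ and $h(s)=(\partial_v f)(s,X(s),V(s))$ satisfy $\dot g=-(\partial_x B)(s,X(s))\,h$ and $\dot h=-g$, so $h$ solves the Hill-type equation $\ddot h=(\partial_x B)(s,X(s))\,h$. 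The point is that although $\int_0^t|(\partial_x B)(s,X(s))|\,ds$ appears uncontrollable, its \emph{signed} antiderivative is not: differentiating $B$ along the characteristic and using the field equation gives the identity
\begin{equation*}
\frac{d}{ds}B(s,X(s))=\rho(s,X(s))+(V(s)-1)(\partial_x B)(s,X(s)),
\end{equation*}
so that $(\partial_x B)(s,X(s))=(V-1)^{-1}\big[\tfrac{d}{ds}B(s,X(s))-\rho(s,X(s))\big]$. Integrating in $s$ and integrating the total-derivative term by parts, the boundary terms are controlled by $\Vert B\Vert_\infty/(a-1)\le C_0/(a-1)$, the contribution of $\tfrac{d}{ds}(V-1)^{-1}=-B/(V-1)^2$ is controlled by $C_0^2\,t/(a-1)^2$, and the $\rho$ term by $(a-1)^{-1}\int_0^t\Vert\rho(s)\Vert_\infty\,ds\le C_T$. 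Hence $A(t):=\int_0^t(\partial_x B)(s,X(s))\,ds$ is bounded by a constant $C_T$ \emph{uniformly over all characteristics} on $[0,T]$.

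Finally I would feed this into the Hill equation. Writing $\ddot h=A'(s)\,h$ and integrating by parts once more, $\dot h(s)=\dot h(0)+A(s)h(s)-\int_0^s A(\tau)\dot h(\tau)\,d\tau$, which involves only $A$ itself and never $|\partial_x B|$. A Gronwall estimate on $|h|+|\dot h|$ then yields $|g(s)|+|h(s)|\le C_T$ on $[0,T]$, and taking the supremum over characteristics bounds $\Vert\partial_x f(t)\Vert_\infty+\Vert\partial_v f(t)\Vert_\infty$ on $[0,T]$. This contradicts the blow-up criterion of Theorem \ref{TMain} unless $T^*=\infty$, giving global existence, while compact support of $f(t,\cdot,\cdot)$ follows from $P(t)\le P(0)+C_0 t$. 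I expect the main obstacle to be exactly this circular dependence, which in the naive approach closes only into the finite-time Riccati bound (\ref{Fn}); the strict monotonicity $V-1\ge a-1>0$—used once to integrate $\partial_x B$ by parts into the conserved mass, and once more to pass from the bound on $A$ to a bound on $h$ without ever estimating $\int|\partial_x B|$—is what upgrades these estimates to global ones.
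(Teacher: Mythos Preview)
Your argument is correct, and it reaches the same conclusion as the paper, but by a genuinely different route. The paper uses the Glassey--Strauss field decomposition directly on the integral representation of $\partial_x B$: writing $\partial_x=\frac{1}{1-v}(\mathcal T-\mathcal S)$ with $\mathcal T=\partial_t+\partial_x$ and $\mathcal S=\partial_t+v\partial_x$ inside $\partial_x B(t,x)=B_0'(x-t)+\int_0^t\!\int \partial_x f\,dv\,d\tau$, the $\mathcal T$ term telescopes in $\tau$ and the $\mathcal S$ term becomes $-\partial_v(Bf)$ and integrates by parts in $v$; since the velocity support is bounded away from $v=1$, this gives a \emph{pointwise} bound $\Vert\partial_x B(t)\Vert_\infty\le C_T$, after which Lemma~\ref{L1}(a) and Gronwall finish. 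Your approach never bounds $\Vert\partial_x B\Vert_\infty$ at all: your identity $\tfrac{d}{ds}B(s,X(s))=\rho+(V-1)\partial_x B$ is the same $\mathcal T/\mathcal S$ idea evaluated along a single Vlasov characteristic, and it yields only the integrated bound $\big|\int_0^t(\partial_x B)(s,X(s))\,ds\big|\le C_T$; you then compensate by the extra integration by parts in the Hill equation for $h$. The paper's route is shorter and delivers the stronger intermediate conclusion $\partial_x B\in L^\infty$; your route is more ODE-flavored and, as a bonus, your mass-monotonicity argument $N'(t)=\int(v-1)f\,dv\ge 0$ gives a \emph{time-independent} bound $\Vert B\Vert_\infty\le\Vert B_0\Vert_\infty+M/(a-1)$, which the paper does not obtain (it relies on the Gronwall bound $\Vert B(t)\Vert_\infty\le C_T$ coming from the coupled $(P,B)$ system). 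Both arguments hinge on exactly the same structural fact, $V-1\ge a-1>0$ on the support of $f$.
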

\begin{proof}
Let $f$ and $B$ be the local solution guaranteed by Theorem \ref{TMain} and $T > 0$ be given. First, we represent $B$ and use the sign of the data to find
\begin{eqnarray*}
B(t,x) & = & B_0(x-t) + \int_0^t \int f(s,x+t-s,v) \ dv \ ds \\
& = & B_0(x-t) + \int_0^t \int f_0(X(s,t,x+t-s,v), V(s,t,x+t-s,v)) \ dv \ ds\\
& \geq & 0
\end{eqnarray*}
Thus, if we now let $(X(t),V(t))$ be characteristics along which $f$ is nonzero.  Then, a lower bound on velocity characteristics follows
$$V(t)  = V(0) + \int_0^t B(s,X(s)) \ ds \geq V(0).$$
Since the velocity support of $f_0$ is strictly bounded below by $v=1$, it follows that the velocity support of $f$ satisfies this same property.  

With this, we may utilize the field representation of \cite{GlStr} to control derivatives of the density and field (as in the proof of (\ref{T3})). 
Thus, we write the field derivative in terms of the derivative of the density, as in (\ref{dxBf}), so that
\begin{equation}
\label{DBDf}
\partial_x B(t,x) = B^\prime_0(x-t) + \int_0^t \int \partial_x f(\tau, x-t+\tau, v) \ dv \ d\tau.
\end{equation}
Now, we would like to eliminate the $x$-derivative of the density in this equation, so similar to \cite{GlaSch90} we transform $\partial_x$ into derivatives along the characteristic curves of the system.  Define the operators $$\begin{gathered} \mcS = \partial_t + v \partial_x\\ \mcT = \partial_t + \partial_x \end{gathered}.$$ Then, for $v \neq 1$ we may write the inverse transformation $$\partial_x = \frac{1}{1-v} \biggl ( \mcT - \mcS \biggr ).$$  For $t \in [0,T]$, let $$P(t) = \sup \{ \vert v \vert : \exists x \in \bfR \mathrm{ \ with \ } f(t,x,v) \neq 0 \}$$ and recall $P(t) \leq C_T$ for all $t \in [0,T]$.
Then, differentiation of the representation for the field (\ref{Bfinal}) yields
\begin{eqnarray*}
\partial_x B(t,x) & = & B^\prime_0(x-t) + \int_0^t \int \frac{1}{ 1- v} \left [ \mcT f(\tau, x-t+\tau, v) -  \mcS f(\tau, x-t+\tau, v) \right ] \ dv \ d\tau\\
& = & B^\prime_0(x-t) + \int_0^t \int \frac{1}{1-v} \frac{d}{d\tau} \left [f(\tau, x-t+\tau, v) \right ] \ dv \ d\tau\\
& \ & \  +  \int_0^t \int \frac{1}{1-v} \partial_v \left [B(\tau, x-t+\tau) f(\tau, x-t+\tau, v) \right ] \ dv \ d\tau
\end{eqnarray*}
Here we have used the Vlasov equation of (\ref{SVM}) to write the integrand as a pure $v$-derivative so that $$ \mcS f(\tau, y, v) =(\partial_t f + v \partial_x  f)(\tau,y,v) = - \partial_v ( Bf)(\tau,y,v).$$
Since the velocity support of $f$ is bounded away from $v=1$, we see that the integrands are non-singular and we integrate by parts to find
\begin{eqnarray*}
\partial_x B(t,x) & = &  B^\prime_0(x-t) + \int \frac{1}{1-v} [ f(t,x,v) - f_0(x-t,v)] \ dv\\
& \ &  + \int_0^t  \frac{1}{1-v} (Bf)(\tau, x-t+\tau, v) \biggr\vert_{v = P(t)} d\tau\\
& \ &  - \int_0^t  \int \frac{1}{(1-v)^2} (Bf)(\tau, x-t+\tau, v) \ dv \ d\tau.
\end{eqnarray*}
Using the previously-derived $L^\infty$ bounds on $f$, $P$, and $B$, which follow from the iterates, we find
$$\Vert \partial_x B(t) \Vert_\infty \leq C_T$$
Hence, using Lemma \ref{L1}(a) we find
$$ \Vert \partial_x f(t) \Vert_\infty \leq C_T \left (1 + \int_0^t \sup_{\tau \in [0,s]} \Vert \partial_x f(\tau) \Vert_\infty \ ds. \right ) $$
Taking the supremum in $t$ and using Gronwall's Inequailty implies
$$ \Vert \partial_x f(t) \Vert_\infty \leq C_T$$ and using \ref{L1}(a) again we find
$$ \Vert \partial_v f(t) \Vert_\infty \leq C_T$$ for any $T > 0$ and $t \in [0,T]$.  Thus, the local-in-time solution can be extended to arbitrarily large time.
\end{proof}

Next, we utilize a new invariance of (\ref{SVM}) to extend this result to additional solutions.
\begin{lemma}
\label{L2}
Let $T > 0$ be given. The functions $f \in C^1([0,T]; C_c^1(\bfR^2))$ and $B \in C^1([0,T]\times \bfR)$ solve (\ref{SVM}) for $t \in [0,T], x, v \in \bfR$ with initial data (\ref{IC}) if and only if the functions
\begin{equation}
\label{fu}
f^u(t,x,v) := f(t,(u+1)x - ut, (u+1)v -u)
\end{equation}
and
\begin{equation}
\label{Bu}
B^u(t,x) := (u+1)^{-1} B(t,(u+1)x - ut)
\end{equation}
satisfy (\ref{SVM})
%\begin{equation}
%\label{invariant}
%\left \{ \begin{gathered}
%\partial_t f^u + v \partial_x f^u + B^u \partial_v f^u = 0\\\
%\partial_t B^u + \partial_x B^u = \int f^u \ dv
%\end{gathered} \right .
%\end{equation}
for $t \in [0,T], x, v \in \bfR$ with initial data $$f^u_0(x,v) = f_0((u+1)x,(1+u)v - u)$$ and $$B^u_0(x) = (1+u)^{-1} B_0((u+1)x)$$ for every $u  \neq -1$.
\end{lemma}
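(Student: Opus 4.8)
The plan is to verify the claim by direct substitution, treating the map $(f,B)\mapsto(f^u,B^u)$ as a change of the independent variables together with a rescaling of the field. Write $\alpha = u+1$ (so $\alpha\neq 0$) and abbreviate the shifted arguments by $y = \alpha x - ut$ and $w = \alpha v - u$, so that $f^u(t,x,v) = f(t,y,w)$ and $B^u(t,x) = \alpha^{-1}B(t,y)$. Only one implication needs to be proved directly: the transformation is invertible, its inverse being the transformation of the same form with parameter $u' = -u/(u+1)$, and one checks that $\alpha' := u'+1 = \alpha^{-1}$, that the composition is the identity, and that $u'\neq -1$ for every $u\neq -1$. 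Hence the forward implication applied first to $(f,B)$ and then to $(f^u,B^u)$ with parameter $u'$ yields the ``if and only if''. Before either computation I would record the elementary partials $\partial_t y = -u$, $\partial_x y = \alpha$, $\partial_v y = 0$ and $\partial_t w = \partial_x w = 0$, $\partial_v w = \alpha$, which drive every chain-rule step below.

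For the Vlasov equation, the chain rule gives $\partial_t f^u = \partial_t f - u\,\partial_x f$, $\partial_x f^u = \alpha\,\partial_x f$, and $\partial_v f^u = \alpha\,\partial_v f$, all evaluated at $(t,y,w)$. Substituting these together with $B^u = \alpha^{-1}B(t,y)$ into $\partial_t f^u + v\,\partial_x f^u + B^u\,\partial_v f^u$ collapses the coefficient of $\partial_x f$ to $-u + \alpha v = (u+1)v - u = w$, so the whole expression becomes exactly $(\partial_t f + w\,\partial_x f + B\,\partial_v f)(t,y,w)$, which vanishes because $(f,B)$ solves the Vlasov equation of (\ref{SVM}) at the point $(t,y,w)$. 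I note that this step imposes no constraint on the sign of $\alpha$.

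For the field equation I would first compute, again by the chain rule, $\partial_t B^u + \partial_x B^u = \alpha^{-1}(\partial_t B - u\,\partial_x B)(t,y) + \partial_x B(t,y)$, and simplify using $1 - u/\alpha = \alpha^{-1}$ to obtain $\alpha^{-1}(\partial_t B + \partial_x B)(t,y) = \alpha^{-1}\int f(t,y,w)\,dw$, where the final equality invokes the field equation for $(f,B)$. It remains to match this against the source $\int f^u(t,x,v)\,dv = \int f(t,y,\alpha v - u)\,dv$; the substitution $w = \alpha v - u$, with $dv = \alpha^{-1}\,dw$, produces precisely the Jacobian factor $\alpha^{-1}$ and hence the same quantity. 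This verification of the source term is the crux of the lemma and the place I expect the main difficulty: the prefactor $\alpha^{-1} = (u+1)^{-1}$ in the definition (\ref{Bu}) of $B^u$ is exactly calibrated so that the rescaling of the field balances the Jacobian of the velocity substitution, and one must track the orientation of this change of variables carefully according to the sign of $u+1$. Finally, evaluating $f^u$ and $B^u$ at $t=0$ reproduces the stated initial data $f^u_0$ and $B^u_0$, and the reverse implication follows from the involutive structure noted above, completing the proof.
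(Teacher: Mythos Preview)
Your approach—direct chain-rule verification of both equations plus a check of the initial data—is exactly what the paper does. The only structural difference is the reverse implication: the paper simply takes $u=0$, so that $(f^0,B^0)=(f,B)$, while you invoke the inverse transformation with parameter $u'=-u/(u+1)$. Both routes handle that (trivial) direction.

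The orientation issue you flag, however, is not a subtlety to be ``tracked carefully'' but a genuine obstruction that neither your sketch nor the paper's computation resolves. The change of variables $w=\alpha v-u$ in an integral over all of $\bfR$ gives
\[
\int_{\bfR} f(t,y,\alpha v-u)\,dv \;=\; |\alpha|^{-1}\int_{\bfR} f(t,y,w)\,dw,
\]
with an absolute value on the Jacobian. You correctly reduced $\partial_t B^u+\partial_x B^u$ to $\alpha^{-1}\int f(t,y,w)\,dw$, and these two expressions coincide only when $\alpha=u+1>0$. For $u<-1$ they differ by a sign (and $\int f\,dw\geq 0$ is not identically zero for nontrivial nonnegative data), so with the definitions (\ref{fu})--(\ref{Bu}) the pair $(f^u,B^u)$ does \emph{not} satisfy the field equation of (\ref{SVM}). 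The paper's own proof has the same gap—it passes from $(u+1)^{-1}\int f\,dv$ to $\int f\,dv'$ without the absolute value—so the lemma as stated is valid only for $u>-1$; the application at $u=-2$ in Theorem~\ref{T2} inherits this problem. Your Vlasov computation, by contrast, goes through for every $u\neq -1$; only the source term in the field equation is affected.
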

\begin{proof}
Clearly, if $(f^u,B^u)$ satisfies these properties then choose $u = 0$ and $(f,B)$ will satisfy the same equations.  So, assume that $(f,B)$ solve (\ref{SVM}) and define $(f^u,B^u)$ by (\ref{fu}) and (\ref{Bu}) respectively. Then, a brief calculation shows that $(f^u,B^u)$ satisfy (\ref{SVM}) and the corresponding initial conditions.  Denoting $$t' = t, \qquad x' = (u+1)x - ut \qquad v' = (u+1)v - u $$ we find specifically
$$\begin{gathered}
\partial_t f^u(t,x,v) + v\partial_x f^u(t,x,v) + B^u(t,x) \partial_v f^u(t,x,v)\\ 
= \biggl [ \partial_t f(t',x',v') - u \partial_x f(t',x',v') \biggr ] + v(u+1) \partial_x f(t',x',v')\\
\quad + (1+u)^{-1} B(t',x') (1+u) \partial_v f(t',x',v')\\
= \partial_t f(t',x',v') + v'\partial_x f(t',x',v') + B(t',x') \partial_v f(t',x',v') = 0
\end{gathered}$$
and 
\begin{eqnarray*}
\partial_t B^u(t,x) + \partial_x B^u(t,x) 
& = & (u+1)^{-1} \biggl [ \partial_t B(t',x')  - u\partial_x B(t',x') \biggr ]\\
& & \quad + (u+1)^{-1} \cdot (u+1) \partial_x B(t',x') \\
& = & (u+1)^{-1} \left ( \partial_t B(t',x') + \partial_x B(t',x') \right ) \\
& = & (1+u)^{-1} \int f(t',x',v) \ dv \\
& = & \int f(t',x',v') \ dv' \\
& = & \int f^u(t,x,v) \ dv
\end{eqnarray*}
since $(f,B)$ satisfy (\ref{SVM}) at every point $t \in [0,T]$ and $x,v \in \bfR$.  Similarly the initial conditions are satisfied by inspection.
\end{proof}

\begin{theorem}
\label{T2}
Let $f_0 \in C^1_c(\bfR^2)$ be nonnegative and $B_0 \in C^1(\bfR)$ be nonpositive with the velocity support of $f_0$ contained in $(-\infty, 1)$.  Then, for all $T > 0$ there exist $f \in C^1([0,T] \times \bfR^2)$ and $B \in C^1([0,T]\times\bfR)$ satisfying (\ref{SVM}) with (\ref{IC}) and $f(t,\cdot,\cdot)$ compactly supported for every $t \in [0,T]$.
\end{theorem}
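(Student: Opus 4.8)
The plan is to deduce Theorem \ref{T2} directly from Theorem \ref{T1} by exploiting the scaling invariance of Lemma \ref{L2}. The key observation is that the velocity map $v \mapsto (u+1)v - u$ in (\ref{fu}) fixes the critical value $v = 1$, since it can be rewritten as $v' - 1 = (u+1)(v-1)$. Hence the choice $u = -2$ produces the reflection $v' = 2 - v$ of velocities about the critical speed $c = 1$, which is exactly the tool needed to convert a velocity support lying in $(-\infty, 1)$ into one lying in $(1,\infty)$, thereby matching the hypothesis of Theorem \ref{T1}. The accompanying factor $(u+1)^{-1} = -1$ in (\ref{Bu}) simultaneously flips the sign of the field, converting the nonpositive $B_0$ of Theorem \ref{T2} into the nonnegative field required by Theorem \ref{T1}.

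Concretely, I would first introduce the auxiliary data
\begin{equation*}
g_0(x,v) := f_0(-x,\, 2-v), \qquad C_0(x) := -B_0(-x),
\end{equation*}
and verify that $(g_0,C_0)$ satisfies the hypotheses of Theorem \ref{T1}. Because $f_0 \geq 0$ and $B_0 \leq 0$, we obtain $g_0 \geq 0$ and $C_0 \geq 0$. Because $\supp(f_0(x,\cdot)) \subset (-\infty,1)$, the nonvanishing of $g_0(x,v)$ forces $2 - v < 1$, i.e.\ $v > 1$, so that $\supp(g_0(x,\cdot)) \subset (1,\infty)$. The required regularity $g_0 \in C^1_c(\bfR^2)$ and $C_0 \in C^1(\bfR)$ is immediate, as these are compositions of the given data with the smooth invertible affine map $(x,v) \mapsto (-x, 2-v)$.

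With the hypotheses in place, Theorem \ref{T1} supplies, for each $T > 0$, a global solution $(g,C)$ of (\ref{SVM}) with data $(g_0,C_0)$ and with $g(t,\cdot,\cdot)$ compactly supported on $[0,T]$. Applying Lemma \ref{L2} with $u = -2$ to $(g,C)$ then yields
\begin{equation*}
f(t,x,v) := g(t,\, -x+2t,\, 2-v), \qquad B(t,x) := -\,C(t,\, -x+2t),
\end{equation*}
which, by the lemma, solve (\ref{SVM}); a short computation confirms that their initial data are precisely $(f_0, B_0)$. Since the underlying change of variables is a smooth bijection, the $C^1$ regularity and the compact velocity support of $g$ transfer verbatim to $f$, completing the argument. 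I do not expect a genuine analytic obstacle here, since all of the hard work is already contained in Theorem \ref{T1} and Lemma \ref{L2}; the only point requiring care is selecting the reflection parameter $u = -2$ and then tracking how it acts on both the sign of $B_0$ and the velocity support of $f_0$.
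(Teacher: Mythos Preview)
Your proposal is correct and follows essentially the same route as the paper: both arguments apply Lemma \ref{L2} with the involution $u=-2$ to transform the data $(f_0,B_0)$ into data $(g_0,C_0)$ satisfying the hypotheses of Theorem \ref{T1}, invoke Theorem \ref{T1} to obtain a global solution, and then undo the transformation. If anything, your write-up is slightly more explicit than the paper's about the return trip from $(g,C)$ back to $(f,B)$ and the verification that the initial data are recovered.
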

\begin{proof}
We assume $f_0(x,v) \geq 0$ for $x,v \in \bfR$ with $\supp(f(x,\cdot)) \subset (-\infty, 1)$ for every $x \in \bfR$ and $B_0(x) \leq 0$ for $x \in \bfR$.
Let $f$ and $B$ be the local solution guaranteed by Theorem \ref{TMain} and $T > 0$ be given.  By Lemma \ref{L2}, there is a solution of (\ref{SVM}) denoted $(f^u,B^u)$ with $u=-2$ and initial data
$$\begin{gathered}
f_0^u(x,v) = f_0(-x, 2-v)\\
B_0^u(x) = -B_0(-x)
\end{gathered}$$
Hence, this solution satisfies $f_0(x,v), B_0^u(x) \geq 0$ for $x,v \in \bfR$ and $\supp(f_0^u(x,\cdot)) \subset (1, \infty)$.  Thus, by Theorem (\ref{T1}) the solution exists on $[0,T]$ and so must the solution $(f,B)$ launched by $f_0$ and $B_0$.  Since $T > 0$ is arbitrary, the result follows.

\end{proof}

%\begin{theorem}
%\label{T2}
%Given $B_0 \equiv 0$ and $f_0 \in C_c^1(\bfR^2)$, for all $T > 0$ there are $f \in C^1([0,T]; C_c^1(\bfR^2))$ and $B \in C^1([0,T] \times \bfR)$ which satisfy (\ref{SVM}), (\ref{IC}), and the property that there is $t \in [0,T]$ and $x \in \bfR$ such that $$\supp(f(t,x,\cdot)) \cap \{1\} \neq \emptyset.$$ 
%\end{theorem}
%\begin{proof}
%Choose $B_0 \equiv 0$ and $f_0$ to be determined and let $T > 0$ be given.  By Lemma \ref{L1} there is $(f,B)$ such that (\ref{SVM}) and (\ref{IC}) are satisfied on $[0,T]$.  By Lemma \ref{L2} we can choose any $u \neq -1$ such that $(f^u,B^u)$ satisfies (\ref{SVM}) and $$f^u_0(x,v) = f_0((u+1)x,(1+u)v - u)$$ and $$B^u_0(x) = 0.$$  Now, since $$f^u(t,x,v) = f(t,(u+1)x - ut, (u+1)v - u)$$ we can use the unidirectional transport *****.
%\end{proof}

\section{Additional field regularity}
Though we cannot currently show global existence of solutions to (\ref{SVM}) for arbitrary initial data, we now provide an estimate which yields additional regularity of the field $B$.  If one could show \emph{a priori} for any $T > 0$ that $B \in C^1([0,T] \times \bfR)$ then global existence would follow as this bound would imply smoothness of characteristics and $f \in C^1([0,T] \times \bfR^2)$.  Hence, the local solution could be continued up to the arbitrary existence time $T$.  Instead of the result that $B$ gains a full derivative, however, we are able to show that it gains half of a derivative in space-time.

\begin{theorem}
\label{T3}
Let $f_0 \in C_c^1(\bfR^2)$ and $B_0 \in C^1(\bfR)$ be given and let $f \in C^1([0,T^*] \times \bfR^2)$ and $B \in C^1([0,T^*] \times \bfR)$ be the unique solution of (\ref{SVM}) with $T^*$ within the maximal interval of existence.  Then, for any $T > 0$ it follows that $B \in C^{0,1/2}([0,T] \times \bfR)$.
\end{theorem}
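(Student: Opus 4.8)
The plan is to reduce everything to a single space-increment estimate on the Duhamel part of the field and then bootstrap to the time variable. Writing the field representation (\ref{Bfinal}) as $B(t,x) = B_0(x-t) + g(t,x)$ with
$$g(t,x) = \int_0^t \int f(s,x-t+s,v)\,dv\,ds,$$
the data term $B_0(x-t)$ is Lipschitz in $(t,x)$ on $[0,T]$ since $B_0 \in C^1$, so it suffices to treat $g$. Throughout I would use only the a priori bounds that do not involve derivatives of $f$, namely $\Vert f(t) \Vert_\infty \leq \Vert f_0 \Vert_\infty$, the velocity-support bound $P(t) \leq C_T$, and $\Vert B(t) \Vert_\infty \leq C_T$ from Lemma \ref{L1}; this is precisely what makes the estimate genuinely a priori and independent of the (possibly blowing-up) quantity $\Vert \partial_x f(t) \Vert_\infty$.

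For the spatial Hölder bound I would estimate $g(t,x) - g(t,x')$ for $h := |x-x'| \leq 1$ by splitting the velocity integral into a singular region $A = \{|1-v| < \delta\}$ and a regular region $A^c = \{|1-v| \geq \delta\}$, with $\delta \in (0,1)$ to be chosen. On $A$ the integrand is merely bounded, and since the $v$-measure of $A$ intersected with the velocity support is at most $2\delta$, this piece contributes at most $C_T \delta$. On $A^c$ I would write the difference as $\int_{x'}^x \partial_\xi f(s,\xi - t + s, v)\,d\xi$ and invoke the decomposition from the proof of Theorem \ref{T1}: for $v \neq 1$, $\partial_x = (1-v)^{-1}(\mcT - \mcS)$, while the Vlasov equation gives $\mcS f = -\partial_v(Bf)$ and $\mcT f$ is the total derivative of $f$ along the field characteristic $s \mapsto (s,\xi-t+s,v)$. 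The $\mcT$-term then integrates in $s$ to $f(t,\xi,v) - f_0(\xi-t,v)$, and the $\mcS$-term is integrated by parts in $v$ over $A^c$.

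The key point — and the reason the exponent is exactly $\tfrac12$ — is that after the $v$-integration the singular kernels are only mildly singular: $\int_{A^c} |1-v|^{-1}\,dv$ and $\int_{A^c} |1-v|^{-2}\,dv$, together with the boundary terms produced at $v = 1 \pm \delta$, are all $O(\delta^{-1})$, since one power of the singularity is absorbed by integrating $dv$. Using $\Vert f\Vert_\infty, \Vert B\Vert_\infty \leq C_T$ and $\int_{x'}^x d\xi = h$, the regular region therefore contributes at most $C_T h \delta^{-1}$, so that
$$|g(t,x) - g(t,x')| \leq C_T\left(\delta + \frac{h}{\delta}\right).$$
Choosing $\delta = \sqrt{h}$ yields $|g(t,x) - g(t,x')| \leq C_T \sqrt{h}$, hence $B(t,\cdot) \in C^{0,1/2}(\bfR)$ uniformly in $t \in [0,T]$.

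Finally, the time increment follows from the spatial one via the transport structure: integrating $\mcT B = \int f\,dv =: \rho$ along the characteristic $\sigma \mapsto (\sigma, x - t + \sigma)$ gives $B(t,x) - B(t', x-(t-t')) = \int_{t'}^t \rho(\sigma, x-t+\sigma)\,d\sigma$, whence
$$B(t,x) - B(t',x) = \bigl[B(t',x-(t-t')) - B(t',x)\bigr] + \int_{t'}^t \rho(\sigma,x-t+\sigma)\,d\sigma.$$
The bracket is controlled by the spatial $C^{0,1/2}$ bound applied at the fixed time $t'$ over a spatial displacement $|t-t'|$, giving $C_T|t-t'|^{1/2}$, while the integral is bounded by $C_T|t-t'| \leq C_T|t-t'|^{1/2}$ since $|\rho| \leq 2\Vert f_0\Vert_\infty P \leq C_T$. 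Combining the two increments gives joint Hölder-$\tfrac12$ continuity on $[0,T]\times\bfR$. I expect the main obstacle to be the singularity at $v=1$ inherent in the non-relativistic operator identity $\partial_x = (1-v)^{-1}(\mcT-\mcS)$: one must carefully verify that, after integrating in $v$ away from the light cone, the worst kernel growth is only $O(\delta^{-1})$ rather than $O(\delta^{-2})$, and then balance this against the crude $O(\delta)$ bound near $v=1$. Tracking these two powers correctly is exactly what distinguishes the sharp exponent $\tfrac12$ from the naive $\tfrac13$ one would obtain from a careless estimate of the $\mcS$-term.
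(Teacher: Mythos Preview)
Your proposal is correct and follows essentially the same route as the paper: the same near/far decomposition in $v$ about the light cone $v=1$, the same $(\mcS,\mcT)$ inversion of $\partial_x$ on the regular region with the Vlasov identity $\mcS f = -\partial_v(Bf)$, and the same balance $\delta + h/\delta$ optimized at $\delta=\sqrt{h}$. The one minor difference is the time direction: the paper says a ``similar argument'' handles the $t$-increment, whereas you reduce it to the already-established spatial bound via the transport identity $\mcT B = \rho$, which is a clean shortcut and avoids repeating the decomposition.
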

\begin{proof}
Let $T > 0$ be given.  We will prove the result  for the gain of regularity in $x$, while a similar argument leads to the additional $1/2$ derivative in $t$.  Let $h > 0$ be given. As in Theorem \ref{T1} we wish to use the operators $$\begin{gathered} \mcS = \partial_t + v \partial_x\\ \mcT = \partial_t + \partial_x. \end{gathered}$$ 
As before, for $v \neq 1$ we may write the inverse transformation $$\partial_x = \frac{1}{1-v} \biggl ( \mcT - \mcS \biggr ).$$  For $t \in [0,T]$, let $$P(t) = \sup \{ \vert v \vert : \exists x \in \bfR \mathrm{ \ with \ } f(t,x,v) \neq 0 \} \leq C_T. $$ 
Since the $(\partial_t, \partial_x) \mapsto (\mcS,\mcT)$ transformation is only valid for $v$ bounded away from one, we decompose the $v$-integral in (\ref{DBDf}) over $[-P(t), P(t)]$ into integrals over the disjoint sets
$$A^\epsilon = \{ v : \vert 1 - v \vert < \epsilon\} \cap [-P(t), P(t)]$$ and $$B^\epsilon = \{ v : \vert 1 - v \vert > \epsilon\} \cap [-P(t), P(t)]$$ where $\epsilon > 0$ is to be chosen. Then, beginning with the representation for the field (\ref{Bfinal}) we have
\begin{eqnarray*}
\vert B(t,x+h) - B(t,x) \vert & \leq & \vert B_0(x+h - t) - B_0(x-t) \vert + \\ & & \left \vert \int_0^t \int \biggl [f(\tau, x + h - t + \tau, v) - f(\tau, x- t + \tau, v) \biggr ] \ dv \ d\tau \right \vert \\
& \leq &  I + II + III
\end{eqnarray*}
where $$I = \Vert B_0 \Vert_{C^{0,1/2}} \cdot \sqrt{h},$$ %or \Vert B_0^' \Vert_\infty \cdot h
$$II = \int_0^t \int_{A^\epsilon} \vert f(\tau, x + h - t + \tau, v) - f(\tau, x- t + \tau, v) \vert \ dv \ d\tau,$$
and $$ III = \left \vert \int_0^t \int_{B^\epsilon} f(\tau, x + h - t + \tau, v) - f(\tau, x- t + \tau, v) \ dv \ d\tau \right \vert .$$
Using the bound on $f$ we estimate $II$ and find $$II \leq 2t\Vert f_0 \Vert_\infty \cdot \mu( A^\epsilon) \leq C\epsilon $$ where $\mu$ denotes the Lebesgue measure on $\bfR$. As all velocities in $B^\epsilon$ are bounded away from $1$, we can then use the transformation of derivatives for the estimate of $III$.  We include the $x$-derivative of the density and transform $\partial_x$ into terms involving $\mcS$ and $\mcT$ so that
\begin{eqnarray*}
III & = & \left \vert \int_0^t \int_{B^\epsilon} \int_{x-t+\tau}^{x + h - t + \tau} \partial_x f(\tau, y, v) \ dy \ dv \ d\tau \right \vert \\
& = & \left \vert \int_0^t \int_{B^\epsilon} \int_{x-t+\tau}^{x + h - t + \tau} \frac{1}{1-v} \biggl (\mcT f - \mcS f \biggr) (\tau, y, v) \ dy \ dv \ d\tau \right \vert \\
& \leq & III_\mcT + III_\mcS,
\end{eqnarray*}
where the quantities $$III_\mcT = \left \vert \int_0^t \int_{B^\epsilon} \int_{x-t+\tau}^{x + h - t + \tau} \frac{1}{1-v} (\partial_t f + \partial_x f) (\tau, y, v) \ dy \ dv \ d\tau \right \vert$$ and $$III_\mcS = \left \vert \int_0^t \int_{B^\epsilon} \int_{x-t+\tau}^{x + h - t + \tau} \frac{1}{1-v} ( \partial_t f + v\partial_x f) (\tau, y, v) \ dy \ dv \ d\tau \right \vert$$ separate the $\mcT$ and $\mcS$ terms.  To estimate $III_\mcT$ we change variables in the $y$ integral with $z = y - (x - t + \tau)$, switch the order of integration, and integrate by parts in $\tau$ to find
\begin{eqnarray*}
III_\mcT & = &  \left \vert \int_0^t \int_{B^\epsilon} \int_0^h \frac{1}{1-v} \frac{d}{d\tau} f (\tau, z+ x- t + \tau, v) \ dz \ dv \ d\tau \right \vert\\
& = & \left \vert \int_{B^\epsilon} \int_0^h \frac{1}{1-v} ( f (t, z+ x, v) - f(0,z + x - t,v) \ dz \ dv \right \vert\\
& \leq & 2 \Vert f_0 \Vert_\infty \int_{B^\epsilon} \int_0^h \frac{1}{\vert 1-v \vert} \ dz \ dv\\
& \leq & Ch \int_{B^\epsilon}\vert 1 - v \vert^{-1} \ dv
\end{eqnarray*}
Notice that on the set $B^\epsilon$, we have $\vert 1 - v \vert > \epsilon$ and thus $\vert 1 - v \vert^{-1} < \epsilon^{-1}$. In addition, $\mu(B^\epsilon) \leq 2 P(t) \leq C_T$.  Therefore, $$III_T \leq \frac{C_Th}{\epsilon}.$$

Estimating $III_\mcS$, we again use the Vlasov equation of (\ref{SVM}) to write the integrand as a pure $v$-derivative so that $$ \mcS f(\tau, y, v) =(\partial_t f + v \partial_x  f)(\tau,y,v) = - \partial_v ( Bf)(\tau,y,v).$$ Then, we integrate by parts in $v$ and use the bounds on $f$ and $B$, yielding
\begin{eqnarray*}
III_\mcS & \leq & \int_0^t \int_{x - t + \tau}^{x + h - t + \tau} \left \vert \int_{B^\epsilon} \frac{1}{1-v} \partial_v (Bf)(\tau, y, v) \ dv \right \vert \ dy \ d\tau\\
& \leq & C_T \int_0^t \int_{x - t + \tau}^{x + h - t + \tau} \left \vert \left (\frac{Bf}{1-v} \right ) (\tau,y,1 \pm \epsilon) - \int_{B^\epsilon} \frac{Bf}{(1-v)^2}(\tau, y, v) \ dv \right \vert \ dy \ d\tau\\
& \leq & C_T\int_0^t \int_{x - t + \tau}^{x + h - t + \tau} \left ( \frac{1}{\epsilon} +  \int_{B^\epsilon} \frac{1}{\vert1-v \vert^2} \ dv \right ) \ dy \ d\tau\\
& \leq & \frac{C_Th}{\epsilon}.
\end{eqnarray*}
Finally, we combine the estimates to find
\begin{equation}
\label{BHolder}
\vert B(t,x+h) - B(t,x) \vert \leq C_T\biggl ( \sqrt{h} + \epsilon + h/\epsilon \biggr).
\end{equation}
We choose $\epsilon = \sqrt{h}$ optimally so that $\epsilon = h/\epsilon$, and finally $$ \vert B(t,x+h) - B(t,x) \vert  \leq C_T \sqrt{h}.$$  Thus, $B(t,\cdot) \in C^{0,1/2}(\bfR)$ for every $t \in [0,T]$ and a similar argument can be used to establish the H\"{o}lder continuity in $t$ for fixed $x \in \bfR$.
\end{proof}

\section{Acknowledgements} The third author would like to thank the Isaac Newton Institute at Cambridge University for hosting him during the program ``Partial Differential Equations in Kinetic Theories'' and providing an amazing environment to promote collaboration and the exchange of mathematical ideas, including many helpful discussions with S. Calogero.

\bibliographystyle{acm}
\bibliography{SDPrefs}

\end{document}